\newcommand{\Ad}{\operatorname{Ad}}
\newcommand{\Ker}{\operatorname{Ker}}
\newcommand{\id}{\operatorname{id}}
\newcommand{\ev}{\operatorname{ev}}
   \theoremstyle{plain}
   \newtheorem{thm}{Theorem}[section]
   \newtheorem{prop}[thm]{Proposition}
   \newtheorem{lem}[thm]{Lemma}
   \newtheorem{cor}[thm]{Corollary}
   \theoremstyle{definition}
   \theoremstyle{remark}
\author{V. Manuilov}
\date{}
\address{Moscow State University,
Leninskie Gory, Moscow, 
119991, Russia}
\email{manuilov@mech.math.msu.su}
\thanks{The author acknowledges partial support by the RFBR grant No. 16-01-00357.}
\title{A noncommutative version of Farber's topological complexity}
\begin{document}
\maketitle

\begin{abstract}
Topological complexity for spaces was introduced by M. Farber as a minimal number of continuity domains for motion planning algorithms. It turns out that this notion can be extended to the case of not necessarily commutative $C^*$-algebras. Topological complexity for spaces is closely related to the Lusternik--Schnirelmann category, for which we do not know any noncommutative extension, so there is no hope to generalize the known estimation methods, but we are able to evaluate the topological complexity for some very simple examples of noncommutative $C^*$-algebras.  

\end{abstract}

\section*{Introduction}

Gelfand duality between compact Hausdorff spaces and unital commutative $C^*$--algebras allows to translate some topological constructions and invariants into the noncommutative setting. The most successful example is $K$-theory, which became a very useful tool in $C^*$-algebra theory. Homotopies between $*$-homomorphisms of $C^*$-algebras also play an important role, but there is no nice general homotopy theory for $C^*$-algebras due to the fact that the loop functor has no left adjoint \cite {Uuye}, Appendix A. Nevertheless, there are some homotopy invariants that allow noncommutative versions.   

The aim of our work is to show that M. Farber's topological complexity \cite{Farber} is one of those. In Section \ref{Section1} we recall the original commutative definition of topological complexity, and in Section \ref{Section2} we use Gelfand duality to reverse arrows in this definition, and show that the resulting noncommutative definition generalizes the commutative one. In the remaining two sections we calculate topological complexity for some simple examples of $C^*$-algebras. In particular, we show that introducing noncommutative coefficients may decrease topological complexity. Although in most our examples topological complexity is either 1 or $\infty$, we provide a noncommutative example
with topological complexity 2.

The author is grateful to A. Korchagin for helpful comments.

\section{Farber's topological complexity}\label{Section1}

The topological approach to the robot motion planning problem was initiated by M. Farber in \cite{Farber}. Let us recall his basic construction. Let $X$ be the configuration space of a mechanical system. A continuous path $\gamma:[0,1]\to X$ represents a motion of the system, with $\gamma(0)$ and $\gamma(1)$ being the initial and the final state of the system. If $X$ is path-connected then the system can be moved to an arbitrary state from a given state. Let $PX$ denote the space of paths in $X$ with the compact-open topology, and let 
\begin{equation}\label{pi}
\pi:PX\to X\times X
\end{equation}
be the map given by $\pi(\gamma)=(\gamma(0),\gamma(1))$. A continuous {\it motion planning algorithm} is a continuous section $$
s:X\times X\to PX
$$ 
of $\pi$. Typically, there may be no continuous motion planning algorithm, so one may take a covering of $X\times X$ by sets $V_1,\ldots,V_n$ (domains of continuity) and require existence of continuous sections 
$$
s_i:V_i\to PX|_{V_i}
$$ 
of maps $\pi_i:PX|_{V_i}\to V_i$, $i=1,\ldots,n$. Here $PX|_{V_i}$ denotes the restiction of $\pi$ onto $V_i$, i.e. the subset of paths $\gamma:[0,1]\to X$ such that $(\gamma(0),\gamma(1))\in V_i$. In this case, the collection of the sections $s_i$, $i=1,\ldots,n$, is called a (discontinuous) motion planning algorithm. There are several versions of the definition, which use various kinds of coverings, e.g. coverings by open or closed sets, or by Euclidean neighborhood retracts, etc., but most of them agree on simplicial polyhedra (cf. \cite{Farber_survey}, Theorem 13.1). The topological complexity $TC(X)$ of $X$ is the minimal number $n$ of domains of continuity, i.e. the minimal number $n$, for which there exists a covering $V_1,\ldots,V_n$ and continuous sections $s_i$ as above. This number measures the complexity of the problem of navigation in $X$.   

\section{Noncommutative version of topological complexity}\label{Section2}

For a compact Hausdorff space $X$ we can rewrite the above construction in terms of unital commutative $C^*$-algebras and their unital $*$-homomorphisms using Gelfand duality. Let $C(X)$ denote the commutative $C^*$-algebra of complex-valued continuous functions on $X$. A closed covering $V_1,\ldots,V_n$ of $X\times X$ corresponds to $n$ surjective $*$-homomorphisms 
$$
j_i:C(X)\otimes C(X)\to C(V_i),
$$ 
$i=1,\ldots,n$, with $\cap_{i=1}^n\Ker j_i=\{0\}$. As the path space $PX$ is not locally compact, it is not Gelfand dual to any $C^*$-algebra, but we can bypass this, replacing the sections $s_i$ by $*$-homomorphisms 
$$
\sigma_i:C(X)\to C(V_i)\otimes C[0,1]
$$ 
defined by
$$
\sigma_i(f)(x,t)=f(s_i(x)(t)),
$$
where $x\in V_i$, $t\in[0,1]$, $f\in C(X)$.
Let us denote by $\ev_t$ the $*$-homomorphism of evaluation at $t\in[0,1]$, and let us consider the compositions 
$$
\ev_0\circ\sigma_i, \ \ev_1\circ\sigma_i:C(X)\to C(V_i).
$$ 
Let 
$$
\pi_0,\pi_1:X\times X\to X
$$ 
denote the projections onto the first and the second copy of $X$ respectively, and let 
$$
p_0,p_1:C(X)\to C(X)\otimes C(X)
$$ 
be the corresponding $*$-homomorphisms. The condition $\pi\circ s_i=\id_{V_i}$ can be written as $\pi_k\circ\pi\circ s_i=\pi_k: V_i\to X$, $k=0,1$, which allows rewriting, in terms of $C^*$-algebras and $*$-homomorphisms, as $j_i\circ p_0=\ev_0\circ\sigma_i$, $j_i\circ p_1=\ev_1\circ\sigma_i$. Thus we have 

\begin{lem}\label{L1}
Continuous sections $s_i:V_i\to PX|_{V_i}$ exist iff there exist $*$-homomorphisms $\sigma_i$ making the diagrams 
\begin{equation}\label{diagram-definition}
\begin{xymatrix}{
C(X) \ar[r]^-{p_k}\ar[d]_-{\sigma_i} &C(X)\otimes C(X)\ar[d]^-{j_i}\\
C(V_i)\otimes C[0,1] \ar[r]_-{\ev_k}  & C(V_i),}
\end{xymatrix}
\end{equation}
$k=0,1$, commute.

\end{lem}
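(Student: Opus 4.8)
The statement is essentially a dictionary translation via Gelfand duality, so the plan is to verify that the correspondence $s_i \leftrightarrow \sigma_i$ set up in the text is a bijection, and that under it the section condition $\pi \circ s_i = \id_{V_i}$ matches exactly the commutativity of the two squares \eqref{diagram-definition}. I would organize this in two directions.

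\textbf{From sections to $*$-homomorphisms.} Given a continuous section $s_i : V_i \to PX|_{V_i}$, define $\sigma_i : C(X) \to C(V_i) \otimes C[0,1] \cong C(V_i \times [0,1])$ by $\sigma_i(f)(x,t) = f(s_i(x)(t))$. First I would check $\sigma_i$ is a well-defined unital $*$-homomorphism: the map $(x,t) \mapsto s_i(x)(t)$ is continuous from $V_i \times [0,1]$ to $X$ because $s_i$ is continuous into the compact-open topology on $PX$ and the evaluation $PX \times [0,1] \to X$ is continuous (here one uses that $[0,1]$ is locally compact Hausdorff, so the compact-open topology is the topology of the exponential law); hence $\sigma_i(f) = f \circ (\text{this map})$ is continuous, and $\sigma_i$ is manifestly multiplicative, $*$-preserving, and unital. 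Next, unwinding definitions: $(\ev_k \circ \sigma_i)(f)(x) = \sigma_i(f)(x,k) = f(s_i(x)(k))$, while $(j_i \circ p_k)(f)(x) = (f \circ \pi_k)(\pi(s_i(x))) = \pi_k(s_i(x)(0), s_i(x)(1))$ evaluated against $f$, i.e. $f(s_i(x)(k))$ for $k = 0,1$. These agree for all $f$ and all $x \in V_i$ precisely because, by the section property, $\pi(s_i(x)) = (x,x)$ forces nothing extra — the key point is simply that $\ev_k \circ \sigma_i$ records the $k$-endpoint of the path $s_i(x)$, and $j_i \circ p_k$ records the $k$-th coordinate of $\pi(s_i(x))$, and these are the same path-endpoint by definition of $\pi$. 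So both squares commute.

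\textbf{From $*$-homomorphisms to sections.} Conversely, given unital $*$-homomorphisms $\sigma_i : C(X) \to C(V_i) \otimes C[0,1]$ making \eqref{diagram-definition} commute, I would invoke Gelfand duality: $\sigma_i$ is dual to a continuous map $\widehat{\sigma_i} : V_i \times [0,1] \to X$, and by the exponential law this transposes to a continuous map $s_i : V_i \to PX$, $s_i(x)(t) = \widehat{\sigma_i}(x,t)$. It remains to see $s_i$ lands in $PX|_{V_i}$ and is a section of $\pi_i$, i.e. $\pi(s_i(x)) = (s_i(x)(0), s_i(x)(1)) = (x,x)$ for all $x \in V_i$. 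Reading the commutativity of the $k=0$ square on the level of spaces: $j_i$ is dual to the diagonal-type inclusion $V_i \hookrightarrow X \times X$ (since $j_i$ is surjective, $V_i$ is a closed subspace of $X\times X$), $p_k$ is dual to $\pi_k$, and $\ev_k$ is dual to $x \mapsto (x,k)$; chasing a point $x \in V_i$ around the square gives $\pi_k$ applied to the image of $x$ in $X \times X$ equals $\widehat{\sigma_i}(x,k) = s_i(x)(k)$. Since the image of $x \in V_i$ in $X\times X$ is $x$ itself and $\pi_k$ picks out the $k$-th coordinate, this says the $k$-th coordinate of $x$ equals $s_i(x)(k)$; doing this for $k=0$ and $k=1$ yields $\pi(s_i(x)) = x$, i.e. $s_i$ is the desired section.

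\textbf{Main obstacle.} The only genuinely delicate point is the topological bookkeeping around the path space: one must be careful that $PX$ with the compact-open topology really is exponential-adjoint to $V_i \times [0,1]$, which needs $[0,1]$ locally compact Hausdorff (true) so that continuity of $s_i$ into $PX$ is equivalent to continuity of the adjoint map on $V_i \times [0,1]$; and one must note that although $PX$ itself is not locally compact (hence not in the range of Gelfand duality), the adjoint map $V_i \times [0,1] \to X$ \emph{is} a map between compact Hausdorff spaces, so Gelfand duality applies there. Everything else is a routine unwinding of the commutative diagrams, carried out already in the paragraph preceding the lemma; the proof is really just the assertion that those unwindings are reversible.
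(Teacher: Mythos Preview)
Your proposal is correct and follows exactly the Gelfand-duality translation sketched in the paragraph preceding the lemma; you simply spell out both directions and the exponential-law bookkeeping, whereas the paper states the lemma as a summary of that discussion and leaves the converse implicit. One notational slip to clean up in the forward direction: for $x\in V_i\subset X\times X$ the section condition reads $\pi(s_i(x))=x$, not $(x,x)$, and $(j_i\circ p_k)(f)(x)=f(\pi_k(x))$ directly---the section property is precisely what then identifies this with $f(s_i(x)(k))$.
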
  

Thus, we may define the topological complexity $TC(A)$ for a unital $C^*$-algebra $A$ as the minimal number $n$ of quotient $C^*$-algebras $B_1,\ldots,B_n$ of $A\otimes A$ with the quotient maps $q_i:A\otimes A\to B_i$, such that 
\begin{enumerate}
\item
$\cap_{i=1}^n\Ker q_i=\{0\}$;
\item
there exist $*$-homomorphisms 
$$
\sigma_i:A\to B_i\otimes C[0,1],\qquad i=1,\ldots,n,
$$
making the diagrams
\begin{equation}\label{diagram}
\begin{xymatrix}{
A \ar[r]^-{p_k}\ar[d]_-{\sigma_i} &A\otimes A\ar[d]^-{q_i}\\
B_i\otimes C[0,1] \ar[r]_-{\ev_k}  & B_i,}
\end{xymatrix}
\end{equation}
$k=0,1$, commute for each $i=1,\ldots,n$, where $p_0(a)=a\otimes 1$, $p_1(a)=1\otimes a$, $a\in A$.
\end{enumerate}
Here and further we always use $\otimes$ to denote the {\it minimal} tensor product of $C^*$-algebras.
If there is no such $n$ then we set $TC(A)=\infty$. 

\begin{cor}
For a compact Hausdorff space $X$, one has $TC(C(X))=TC(X)$ if $TC(X)$ is defined using {\sl closed coverings}.

\end{cor}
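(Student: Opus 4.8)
The plan is to read off both sides of the claimed equality from Gelfand duality, using Lemma~\ref{L1} for the part that involves the path space. First I would record the two structural facts about the commutative case: the minimal tensor product of commutative $C^*$-algebras is commutative, with a canonical identification $C(X)\otimes C(X)\cong C(X\times X)$, and every quotient of a commutative unital $C^*$-algebra is again commutative and unital. Consequently, in the definition of $TC(C(X))$ the algebras $B_i$ are automatically of the form $C(V_i)$ for closed subsets $V_i\subseteq X\times X$, and the quotient maps $q_i:C(X\times X)\to C(V_i)$ are, up to isomorphism, the restriction homomorphisms $f\mapsto f|_{V_i}$; conversely, any finite family of closed subsets of $X\times X$ arises this way. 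Thus choosing data as in (1)--(2) for $A=C(X)$ is the same as choosing a finite family $V_1,\dots,V_n$ of closed subsets of $X\times X$ together with maps $\sigma_i$.

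Next I would match condition (1) with the covering property. Since $\Ker q_i=\{f\in C(X\times X): f|_{V_i}=0\}$, one has $\bigcap_{i=1}^n\Ker q_i=\{f: f|_{\bigcup_i V_i}=0\}$, and because each $V_i$ is closed and the union is finite (so $\bigcup_i V_i$ is closed), this intersection is $\{0\}$ exactly when $\bigcup_{i=1}^n V_i=X\times X$, i.e. precisely when the $V_i$ form a closed covering of $X\times X$.

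Then I would match condition (2) with the existence of continuous motion planning algorithms. Using $C(V_i)\otimes C[0,1]\cong C(V_i\times[0,1])$, the commuting diagram \eqref{diagram} for $A=C(X)$, $B_i=C(V_i)$, $q_i=j_i$ is literally the diagram \eqref{diagram-definition} of Lemma~\ref{L1}; hence a $*$-homomorphism $\sigma_i$ making it commute for $k=0,1$ exists if and only if a continuous section $s_i:V_i\to PX|_{V_i}$ of $\pi_i$ exists. Combining the two matchings, a family $(B_i,q_i,\sigma_i)_{i=1}^n$ witnessing $TC(C(X))\le n$ corresponds to a closed covering $(V_i)_{i=1}^n$ of $X\times X$ with continuous sections $(s_i)_{i=1}^n$, i.e. to a witness of $TC(X)\le n$ in the closed-covering formulation, and vice versa. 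Taking the minimal such $n$ on each side yields $TC(C(X))=TC(X)$.

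The only step requiring genuine argument rather than a citation is the structural claim in the first paragraph: that in the commutative case all the $C^*$-algebraic data is forced to be commutative and therefore comes from Gelfand duality, so that the noncommutative latitude built into the definition of $TC(A)$ adds nothing when $A=C(X)$. Concretely one must verify that $C(X)\otimes C(X)$ really is $C(X\times X)$ for the minimal tensor product and that surjective unital $*$-homomorphisms out of it are classified, up to isomorphism over $C(X\times X)$, by closed subsets $V\subseteq X\times X$ via restriction. Everything else is a direct translation through Lemma~\ref{L1}.
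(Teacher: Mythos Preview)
Your argument is correct and follows the same approach as the paper: identify the $B_i$ with $C(V_i)$ for closed $V_i\subset X\times X$ via Gelfand duality, translate the kernel condition into the covering condition, and invoke Lemma~\ref{L1} for the correspondence between the $\sigma_i$ and the sections $s_i$. The paper's proof is just a terser version of what you wrote, leaving the appeal to Lemma~\ref{L1} implicit.
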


\begin{proof}
Commutativity of $A=C(X)$, hence of $A\otimes A$, implies commutativity of $B_i$, hence $B_i=C(V_i)$ for some $V_i$. Surjectivity of $q_i$ implies that $V_i$ is a closed subset of $X\times X$. The condition $\cap_{i=1}^n\Ker q_i=\{0\}$ means that $\{V_1,\ldots,V_n\}$ is a covering for $X\times X$. 

\end{proof}

As we shall see later, topological complexity is not well suited for general $C^*$-algebras, e.g. it is infinite for topologically non-trivial simple $C^*$-algebras, but there are two good classes of $C^*$-algebras, for which this characterization may be interesting --- the class of noncommutative CW complexes introduced in \cite{ELP} and the class of $C(X)$-algebras. Most of our examples are from the first class.

Note that in the commutative case, topological complexity makes sense only for path-connected spaces --- otherwise any two points may be not connected by a path, i.e. the map (\ref{pi}) is not surjective. There is no good $C^*$-algebraic analog for that, but the following holds:
\begin{lem}\label{L6}
Let $A=A_1\oplus A_2$. Then $TC(A)=\infty$.

\end{lem}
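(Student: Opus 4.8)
The plan is to show that no finite number $n$ of quotients can work by deriving a contradiction from the connectedness-type obstruction hidden in the decomposition $A = A_1 \oplus A_2$. First I would observe that $A \otimes A = (A_1 \oplus A_2) \otimes (A_1 \oplus A_2) = (A_1 \otimes A_1) \oplus (A_1 \otimes A_2) \oplus (A_2 \otimes A_1) \oplus (A_2 \otimes A_2)$, with four central projections $e_{kl}$ ($k,l \in \{1,2\}$) summing to the unit. The key point is that a $*$-homomorphism $\sigma_i : A \to B_i \otimes C[0,1]$ together with the commuting diagrams \eqref{diagram} forces $q_i \circ p_0$ and $q_i \circ p_1$ to be connected through a homotopy of $*$-homomorphisms (namely $\ev_t \circ \sigma_i$, $t \in [0,1]$). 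In particular $q_i \circ p_0$ and $q_i \circ p_1$ must agree on any homotopy-invariant data; concretely, $q_i(\ev_0 \circ \sigma_i(1_{A_1}))$ and $q_i(\ev_1 \circ \sigma_i(1_{A_1}))$ are homotopic projections in $B_i$, hence (being projections joined by a norm-continuous path of projections) they are Murray--von Neumann equivalent and, more to the point, they determine the same element of $K_0(B_i)$ and of every quotient of $B_i$.

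Next I would translate this into a statement about the projections $p_0(1_{A_1}) = 1_{A_1} \otimes 1_A$ and $p_1(1_{A_1}) = 1_A \otimes 1_{A_1}$ in $A \otimes A$. In terms of the central projections above, $p_0(1_{A_1}) = e_{11} + e_{12}$ while $p_1(1_{A_1}) = e_{11} + e_{21}$; these are genuinely different central projections of $A \otimes A$, differing by $e_{12} - e_{21}$. The requirement from condition (2), via Lemma analysis of \eqref{diagram}, is that $q_i(p_0(1_{A_1}))$ and $q_i(p_1(1_{A_1}))$ be connected by a path of projections in $B_i$. Now I would use condition (1): since $\bigcap_{i=1}^n \Ker q_i = \{0\}$, the element $e_{12}$ is nonzero in at least one $B_i$, say $q_{i_0}(e_{12}) \neq 0$. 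But in $B_{i_0}$ the images of $e_{11} + e_{12}$ and $e_{11} + e_{21}$ must be path-connected projections, while these are images of \emph{orthogonal-summand} central projections in the direct-sum decomposition — I would argue that their images remain "locally constant" in an appropriate sense: restricting $q_{i_0}$ to the corner $e_{12}(A\otimes A)e_{12}$ versus $e_{21}(A \otimes A)e_{21}$ shows $q_{i_0}(e_{11}+e_{12})$ and $q_{i_0}(e_{11}+e_{21})$ cannot be joined by a continuous path unless $q_{i_0}(e_{12}) = q_{i_0}(e_{21}) = 0$, because a path of projections from one to the other would have to pass through projections of intermediate "rank" straddling the two summands, which the direct-sum structure forbids.

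The cleanest way to make the last step rigorous is probably to produce a single "obstruction homomorphism": compose $q_i$ with a character-like map, or pick a pure state / irreducible representation $\rho$ of $A \otimes A$ that does not vanish on $\Ker$ of all the $q_i$ simultaneously but on which $p_0(1_{A_1})$ and $p_1(1_{A_1})$ act by different scalars $1$ and $0$ (such $\rho$ exists because $e_{12} \neq 0$ admits an irreducible representation nonzero on it, and any irreducible representation of $A_1 \otimes A_2$ sends $1_{A_1}\otimes 1_A$ to $1$ and $1_A \otimes 1_{A_1}$ to $0$). Since $\rho$ factors through some $q_i$ (here I would need that an irreducible representation nonzero on $e_{12}$ can be arranged to factor through the specific $B_i$ with $q_i(e_{12})\neq 0$ — this requires that $q_i$ restricted to $e_{12}(A\otimes A)e_{12}$ is nonzero, which follows from $\bigcap \Ker q_j = 0$ after possibly noting $e_{12}(A\otimes A)e_{12} \cong A_1 \otimes A_2$ is simple-free enough), we get that $\rho \circ \sigma_i : A \to M(\text{something}) \otimes C[0,1]$ has $\ev_0$ and $\ev_1$ components sending $1_{A_1}$ to a rank-one and a rank-zero projection respectively — impossible for a continuous path. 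I expect the main obstacle to be exactly this descent-to-a-single-quotient step: ensuring that the representation detecting the difference between $p_0(1_{A_1})$ and $p_1(1_{A_1})$ can be chosen to factor through one of the finitely many $q_i$'s permitted by condition (1), rather than only through the "diagonal" combination of all of them.
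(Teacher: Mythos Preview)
Your setup is exactly the paper's: decompose $A\otimes A$ into four summands with central units $e_{kl}$, and track the projections $p_0(1_{A_1})=e_{11}+e_{12}$ and $p_1(1_{A_1})=e_{11}+e_{21}$ through $q_i$. The obstruction you identify is the right one. However, you then overcomplicate the argument and end up with a self-acknowledged gap in the representation-theoretic descent step.

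The missing observation that makes your \emph{first} approach rigorous (and the detour unnecessary) is this: because $q_i$ is \emph{surjective}, the images $q_i(e_{kl})$ are \emph{central} projections in $B_i$. Hence $B_i$ itself splits as a direct sum $\bigoplus_{k,l} q_i(e_{kl})B_i$, and any continuous path of projections in $B_i$ splits into four independent paths, one in each corner. In the corner $q_i(e_{12})B_i$, the path $t\mapsto q_i(e_{12})\,\sigma_i(1_{A_1})(t)\,q_i(e_{12})$ is a continuous path of projections from $q_i(e_{12})$ to $0$; since the norm of a projection is $0$ or $1$, this forces $q_i(e_{12})=0$. Likewise $q_i(e_{21})=0$.

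Crucially, this argument works for \emph{every} $i$, not just one, so $e_{12},e_{21}\in\bigcap_i\Ker q_i=\{0\}$, a contradiction. There is no descent problem at all: you do not need to find a single $i$ detecting $e_{12}$, because the homotopy constraint kills $q_i(e_{12})$ in each $B_i$ separately. Your ``rank'' intuition was correct but needed only the word ``central'' to become a proof.
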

\begin{proof}
One has $A\otimes A=\oplus_{k,l=1}^2 A_k\otimes A_l$.  Let $q_i:A\otimes A\to B_i$, $i=1,\ldots,n$, and $\sigma:A\to B_i\otimes C[0,1]$ be as in the definition of topological complexity, and let $e_1=q_i(1_{A_1}\otimes 1_{A_1})$, $e_2=q_i(1_{A_1}\otimes 1_{A_2})$, $e_3=q_i(1_{A_2}\otimes 1_{A_1})$, $e_4=q_i(1_{A_2}\otimes 1_{A_2})$. Then $e_1,\ldots,e_4$ are projections in $B_i$, and, as $q_i$ is surjective, any element of $B_i$ has the form $\sum_{k=1}^4 e_kbe_k$. In particular, if $e\in B_i$ is a projection then each $e_kee_k$ is a projection, and if $e(t)$, $t\in [0,1]$, is a homotopy of projections, then we have four homotopies $e_ke(t)e_k$.

Let $a=1_{A_1}\oplus 0_{A_2}\in A$. Then $q_i(p_0(a))=e_1+e_2$ and $q_i(p_1(a))=e_1+e_3$ should be connected by a homotopy. This is possible only if $e_2=e_3=0$. As this argument does not depend on $i$, we conclude that $1_{A_1}\otimes 1_{A_2},1_{A_2}\otimes 1_{A_1}\in \cap_{i=1}^n\Ker q_i=\{0\}$ --- a contradiction.

\end{proof}

The topological complexity of a space $X$ can be estimated from above by using covering dimension of $X$, and from below using multiplicative structure in cohomology. Regretfully, these estimates cannot work in the noncommutative case, thus making the problem of evaluating topological complexity even more difficult.

\section{Case $TC(A)=1$}\label{Section3}

The condition $TC(A)=1$ means that the two inclusions of $A$ into $A\otimes A$, $p_0:a\mapsto a\otimes 1$ and $p_1:a\mapsto 1\otimes a$, are homotopic. This property is similar to, but different from that of approximately inner half flip \cite{Toms-Winter}, which means that $p_0$ and $p_1$ are approximately unitarily equivalent, i.e. there exist unitaries $u_n\in A\otimes A$ such that $\lim_{n\to\infty}\|p_1(a)-\Ad_{u_n}p_0(a)\|=0$ for any $a\in A$.

The condition $TC(A)=1$ imposes restrictions on the $K$-theory groups of $A$. Let $K_*(A)$ denote the graded $K$-theory group of $A$, and let $\mathbf 1\in K_0(A)$ be the class of the unit element. Recall that if $A$ is in the bootstrap class \cite{Schochet} then it satisfies the K\"unneth formula, hence $K_*(A)\otimes K_*(A)\subset K_*(A\otimes A)$. The bootstrap class is the smallest class which contains all separable type I $C^*$-algebras and is closed under extensions, strong Morita equivalence, inductive limits, and crossed products by $\mathbb R$ and by $\mathbb Z$.

\begin{lem}\label{K}
Let $A$ satisfy $K_*(A)\otimes K_*(A)\subset K_*(A\otimes A)$. If $K_*(A)\otimes\mathbf 1\neq \mathbf 1\otimes K_*(A)$ then $TC(A)>1$.

\end{lem}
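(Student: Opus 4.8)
The plan is to exploit the fact that $TC(A)=1$ forces $p_0$ and $p_1$ to be homotopic $*$-homomorphisms $A\to A\otimes A$, and then to push this homotopy down to $K$-theory. First I would observe that $TC(A)=1$ means (with $n=1$ and $B_1=A\otimes A$, $q_1=\id$) that there is a $*$-homomorphism $\sigma\colon A\to (A\otimes A)\otimes C[0,1]$ with $\ev_0\circ\sigma=p_0$ and $\ev_1\circ\sigma=p_1$; that is, $p_0$ and $p_1$ are homotopic through $*$-homomorphisms. Since $K$-theory is a homotopy functor, the induced maps $(p_0)_*,(p_1)_*\colon K_*(A)\to K_*(A\otimes A)$ coincide.

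Next I would identify these induced maps on the subgroup $K_*(A)\otimes K_*(A)\subset K_*(A\otimes A)$ coming from the K\"unneth formula. Under the external product, $(p_0)_*$ sends a class $x\in K_*(A)$ to $x\otimes\mathbf 1$, because $p_0(a)=a\otimes 1$ is, up to the isomorphism $A\cong A\otimes\mathbb C\subset A\otimes A$, exactly the inclusion of the first tensor factor; similarly $(p_1)_*(x)=\mathbf 1\otimes x$. This is the step requiring a little care: one must check that the external $K$-theory product is compatible with the inclusions $p_0,p_1$ in the expected way, i.e. that the external product of $x\in K_*(A)$ with $\mathbf 1=[1_A]\in K_0(A)$ is precisely $(p_0)_*(x)$, and symmetrically for $p_1$. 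This is a standard naturality property of the Kasparov/external product, but it is the one place where the argument is not purely formal.

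Combining the two steps: for every $x\in K_*(A)$ we get $x\otimes\mathbf 1=(p_0)_*(x)=(p_1)_*(x)=\mathbf 1\otimes x$ inside $K_*(A\otimes A)$. Therefore $K_*(A)\otimes\mathbf 1=\mathbf 1\otimes K_*(A)$ as subsets of $K_*(A\otimes A)$. Contrapositively, if $K_*(A)\otimes\mathbf 1\neq\mathbf 1\otimes K_*(A)$ then $p_0$ and $p_1$ cannot be homotopic, so $TC(A)\neq 1$, and since $TC(A)$ is a positive integer or $\infty$ this gives $TC(A)>1$. The main obstacle, as noted, is the bookkeeping identifying $(p_k)_*$ with the external product by $\mathbf 1$; everything else is homotopy invariance of $K$-theory together with the hypothesis that the K\"unneth map $K_*(A)\otimes K_*(A)\to K_*(A\otimes A)$ is injective, which is exactly what lets us read the equality $x\otimes\mathbf 1=\mathbf 1\otimes x$ as an equality of the two subgroups.
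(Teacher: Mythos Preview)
Your proposal is correct and follows essentially the same approach as the paper: the paper's proof is the two-sentence observation that $TC(A)=1$ forces $(p_0)_*=(p_1)_*$ by homotopy invariance of $K$-theory, hence $K_*(A)\otimes\mathbf 1=\mathbf 1\otimes K_*(A)$. Your write-up simply unpacks this in more detail, including the identification of $(p_k)_*$ with the external product by $\mathbf 1$, which the paper leaves implicit.
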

\begin{proof}
This follows from homotopy invariance of $K$-theory groups. If $TC(A)=1$ then the flip on $K_*(A\otimes A)$ must induce the identity map.

\end{proof}

For spaces, it is known that $TC(X)=1$ iff $X$ is contractible. For $C^*$-algebras, it is reasonable to call a unital $C^*$-algebra $A$ {\it contractible to a point} if there exists a $*$-homomorphism $h:A\to A\otimes C[0,1]$ and a $*$-homomorphism $i:A\to\mathbb C$ such that $\ev_1\circ h=\id_A$ and $\ev_0\circ h=j\circ i$, where $j:\mathbb C\to A$ is defined by $j(1)=1_A$. If $B$ is a non-unital contractible $C^*$-algebra then its unitalization $B^+$ is contractible to a point.

\begin{lem}\label{L2}
Let $A$ be contractible to a point. Then $TC(A)=1$.

\end{lem}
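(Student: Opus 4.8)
The plan is to show directly that $TC(A)=1$ by exhibiting a single quotient of $A\otimes A$ together with a section $\sigma$ fitting into the diagram (\ref{diagram}). The obvious candidate for the quotient is $A\otimes A$ itself, i.e.\ $n=1$, $B_1=A\otimes A$, and $q_1=\id_{A\otimes A}$; then condition (1) is automatic since $\Ker q_1=\{0\}$, and the whole problem reduces to constructing a $*$-homomorphism $\sigma:A\to (A\otimes A)\otimes C[0,1]$ with $\ev_0\circ\sigma=p_0$ and $\ev_1\circ\sigma=p_1$. In other words, I need to connect the two inclusions $p_0,p_1:A\to A\otimes A$ by a homotopy of $*$-homomorphisms, which by the remark opening Section \ref{Section3} is exactly what $TC(A)=1$ means.

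To build this homotopy I would use the contraction $h:A\to A\otimes C[0,1]$ together with the augmentation $i:A\to\mathbb C$. The idea is to run the contraction in the first variable while holding the second fixed, pass through the ``point'' $1_A\otimes a$ in the middle, and then run the contraction in the second variable; concretely one splits $[0,1]$ into $[0,1/2]$ and $[1/2,1]$ and defines $\sigma$ piecewise. On $[0,1/2]$ one uses $(\id_A\otimes\,\cdot\,)$ applied to the reparametrized path $t\mapsto h(a)(1-2t)$ tensored appropriately so that at $t=0$ one gets $a\otimes 1$ and at $t=1/2$ one gets $(j\circ i)(a)\otimes 1 = 1_A\cdot i(a)\otimes 1$, which since $i(a)\in\mathbb C$ equals $1_A\otimes (i(a)\cdot 1_A)$; then on $[1/2,1]$ one uses the contraction in the second factor, running from $1_A\otimes(j\circ i)(a)$ at $t=1/2$ to $1_A\otimes a$ at $t=1$. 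The matching at $t=1/2$ works because both pieces agree on the scalar $i(a)$, so the two halves glue to a genuine $*$-homomorphism into $(A\otimes A)\otimes C[0,1]$.

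The step I expect to require the most care is verifying that the piecewise-defined map really is a $*$-homomorphism and that the two halves patch continuously at $t=1/2$. This amounts to checking that the map $A\to A\otimes A$ obtained by restricting each half to $t=1/2$ is the same $*$-homomorphism, namely $a\mapsto i(a)(1_A\otimes 1_A)$, which follows from the defining property $\ev_0\circ h=j\circ i$ of the contraction and the fact that $j(1)=1_A$. Once that is in place, evaluating at the endpoints gives $\ev_0\circ\sigma=p_0$ and $\ev_1\circ\sigma=p_1$ from $\ev_1\circ h=\id_A$, so the diagram (\ref{diagram}) commutes for $k=0,1$ with $n=1$, and hence $TC(A)=1$. (Since $TC(A)\ge 1$ always by definition, this is the full statement.) One should also record that the relevant ``functoriality in one tensor leg'' maps $\id_A\otimes(\cdot):A\otimes C[0,1]\to A\otimes A\otimes C[0,1]$ and its analogue in the second leg are well defined for the minimal tensor product, which is immediate.
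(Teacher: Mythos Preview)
Your proposal is correct and is essentially the same argument as the paper's: both build the homotopy by running the contraction $h$ in the first tensor leg down to the scalar $i(a)\cdot 1_A\otimes 1_A=1_A\otimes i(a)\cdot 1_A$, then running it back up in the second leg, with the gluing justified exactly as you say. The only cosmetic differences are that the paper parametrizes over $[-1,1]$ split at $0$ and packages the first half via the flip $\alpha$ (so that $\alpha(1\otimes h_t(a))=h_t(a)\otimes 1$), whereas you use $[0,1]$ split at $1/2$; your phrase ``$(\id_A\otimes\,\cdot\,)$'' is slightly garbled relative to the formula you then write down, but the intended map $a\mapsto h_{1-2t}(a)\otimes 1$ is clear and correct.
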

\begin{proof}
Let $\alpha:A\otimes A\otimes C[0,1]$ be the flip, $\alpha(a_1\otimes a_2\otimes f)=a_2\otimes a_1\otimes f$, where $a_1,a_2\in A$, $f\in C[0,1]$. 
Let $h:A\to A\otimes C[0,1]$ be the homotopy as above. We write $h_t$ for $\ev_t\circ h$.

Define a $*$-homomorphism
$$
\sigma:A\to A\otimes A\otimes C[-1,1]
$$
by setting, for $a\in A$,  
$$
\sigma(a)(t)=\left\lbrace\begin{array}{cl}
\alpha(1\otimes h_t(a)),&{\mbox{if\ }}t\in[0,1];\\
1\otimes h_{-t}(a),&{\mbox{if\ }}t\in[-1,0].
\end{array}\right. 
$$ 
Then $\ev_1\circ\sigma(a)=a\otimes 1$, $\ev_{-1}\circ\sigma(a)=1\otimes a$. Continuity of $\sigma$ at $t=0$ follows from the equality $i(a)\otimes 1=1\otimes i(a)$.

\end{proof}

\begin{cor}
If $A_n=\{f\in C([0,1];M_n): f(1) \mbox{\ is\ scalar}\}$ then $TC(A_n)=1$.
\end{cor}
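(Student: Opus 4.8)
The plan is to deduce this from Lemma \ref{L2} by showing that $A_n$ is contractible to a point. The scalar-valued homomorphism $i:A_n\to\mathbb C$ is essentially forced: since every $f\in A_n$ has $f(1)$ equal to a scalar multiple of the identity, define $i(f)$ to be that scalar, so that $f(1)=i(f)\,1_{M_n}$. For the contracting homotopy I would use the identification of $A_n\otimes C[0,1]$ with the subalgebra of $C([0,1]^2;M_n)$ consisting of those $g(s,t)$ with $g(1,t)\in\mathbb C\,1_{M_n}$ for all $t$ (this follows by tensoring the defining extension $0\to\Ker(\ev_1)\to C([0,1];M_n)\to M_n\to 0$ with $C[0,1]$ and taking the preimage of $\mathbb C\,1_{M_n}\otimes C[0,1]$), and then define $h:A_n\to A_n\otimes C[0,1]$ as the pullback along the continuous map $\phi:[0,1]^2\to[0,1]$, $\phi(s,t)=st+1-t$; that is, $h(f)(s,t)=f(st+1-t)$.

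Then I would carry out three routine verifications. Being a pullback along a continuous map of intervals, $h$ is automatically a unital $*$-homomorphism into $C([0,1]^2;M_n)$, and it lands in $A_n\otimes C[0,1]$ because $h(f)(1,t)=f(\phi(1,t))=f(1)$ is scalar for every $t$. Evaluating the $C[0,1]$-coordinate at $t=1$ gives $h(f)(s,1)=f(\phi(s,1))=f(s)$, so $\ev_1\circ h=\id_{A_n}$. Evaluating at $t=0$ gives $h(f)(s,0)=f(\phi(s,0))=f(1)=i(f)\,1_{M_n}$, a constant function, so $\ev_0\circ h=j\circ i$ with $j(1)=1_{A_n}$. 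Hence $A_n$ is contractible to a point, and Lemma \ref{L2} yields $TC(A_n)=1$.

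Since the computation is short, the only point that requires thought — and what I would flag as the crux — is the choice of $\phi$: the homotopy must deformation retract the interval $[0,1]$ onto the endpoint $1$ in such a way that the point $s=1$ stays fixed throughout, so that the scalarity constraint defining $A_n$ is preserved at every time $t$. Retracting toward $0$, for instance, would fail this; the formula $\phi(s,t)=st+1-t$ works precisely because $\phi(1,t)\equiv 1$.
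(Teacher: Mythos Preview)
Your proof is correct and follows the paper's intended route: the corollary is stated without proof immediately after Lemma~\ref{L2}, and the paper's preceding remark (``If $B$ is a non-unital contractible $C^*$-algebra then its unitalization $B^+$ is contractible to a point'') is the implicit justification, since $A_n$ is the unitalization of the cone $C_0([0,1))\otimes M_n$. You have simply written out the contracting homotopy explicitly rather than appealing to that general observation, and your choice $\phi(s,t)=st+1-t$ is exactly the standard cone contraction; the verifications are all in order.
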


\begin{lem}\label{L3}
Let $TC(A)=1$. If there exists a unital $*$-homomorphism $i:A\to\mathbb C$ then $A$ is contractible to a point.

\end{lem}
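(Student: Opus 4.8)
The plan is to use the unital character $i$ to collapse one of the two tensor factors of $A\otimes A$, so that the homotopy witnessing $TC(A)=1$ descends to a contraction of $A$ onto a point.

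First I would unwind the hypothesis $TC(A)=1$. It provides a single quotient $q_1\colon A\otimes A\to B_1$ with $\Ker q_1=\{0\}$, hence $q_1$ is an isomorphism, together with a $*$-homomorphism $\sigma_1\colon A\to B_1\otimes C[0,1]$ satisfying $\ev_k\circ\sigma_1=q_1\circ p_k$ for $k=0,1$. Composing with $q_1^{-1}\otimes\id_{C[0,1]}$ produces a $*$-homomorphism $\sigma\colon A\to A\otimes A\otimes C[0,1]$ with $\ev_0\circ\sigma=p_0$ and $\ev_1\circ\sigma=p_1$; in other words, the two inclusions $a\mapsto a\otimes 1$ and $a\mapsto 1\otimes a$ are homotopic through $\sigma$.

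Next, since the minimal tensor product is functorial in each variable, $i$ induces a $*$-homomorphism $\id_A\otimes i\colon A\otimes A\to A\otimes\mathbb C=A$. Tensoring further with $\id_{C[0,1]}$, precomposing with $\sigma$, and reversing orientation, I would set
$$
h\colon A\to A\otimes C[0,1],\qquad h(a)(t)=(\id_A\otimes i)\bigl(\sigma(a)(1-t)\bigr).
$$
Evaluating at the endpoints: $\ev_1\circ h=(\id_A\otimes i)\circ p_0$ sends $a\mapsto a\cdot i(1_A)=a$ because $i$ is unital, so $\ev_1\circ h=\id_A$; and $\ev_0\circ h=(\id_A\otimes i)\circ p_1$ sends $a\mapsto i(a)\,1_A=(j\circ i)(a)$, where $j\colon\mathbb C\to A$, $j(1)=1_A$. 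Thus $h$ exhibits $A$ as contractible to a point, with $i$ the required $*$-homomorphism to $\mathbb C$.

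The argument is essentially a functoriality computation, so I do not expect a deep obstacle. The only points requiring care are: (i) confirming that $\id_A\otimes i$ and $\id_A\otimes i\otimes\id_{C[0,1]}$ are genuinely well defined on the minimal tensor product (automatic, either because minimal tensor products are functorial in each variable or because $\mathbb C$ is nuclear), and (ii) the bookkeeping with the identifications $A\otimes\mathbb C\otimes C[0,1]\cong A\otimes C[0,1]$ and the orientation reversal $t\mapsto 1-t$, together with the observation that unitality of $i$ is precisely what forces $\ev_1\circ h=\id_A$.
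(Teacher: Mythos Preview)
Your proof is correct and follows essentially the same approach as the paper: collapse one tensor factor of $A\otimes A$ via the character $i$ so that the homotopy $\sigma$ descends to a contraction $h$. The only cosmetic difference is that the paper applies $i$ to the \emph{first} factor (so no reparametrization $t\mapsto 1-t$ is needed), whereas you apply it to the second.
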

\begin{proof}
Let $\sigma:A\to A\otimes A\otimes C[0,1]$ satisfy $\ev_0\circ\sigma(a)=a\otimes 1$ and $\ev_1\circ\sigma(a)=1\otimes a$, $a\in A$. Let $\bar\iota:A\otimes A\otimes C[0,1]\to A\otimes C[0,1]$ be the map defined by $\bar\iota(a_1\otimes a_2\otimes f)=i(a_1)\cdot a_2\otimes f$, where $a_1,a_2\in A$, $f\in C[0,1]$. Set $h=\bar\iota\circ\sigma:A\to A\otimes C[0,1]$. Then
$\ev_0\circ h(a)=i(a)\cdot 1$, $\ev_1\circ h(a)=a$, hence $h$ is the required homotopy.

\end{proof}

Below we list three examples of $C^*$-algebras with topological complexity 1. The proofs are known to specialists, but we could not find exact references.

\begin{prop}\label{L3}
One has $TC(M_n)=1$. 

\end{prop}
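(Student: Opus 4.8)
The plan is to show that $M_n$ is contractible to a point in the sense defined above, and then invoke Lemma \ref{L2}. Concretely, I must produce a unital $*$-homomorphism $i:M_n\to\mathbb C$ and a $*$-homomorphism $h:M_n\to M_n\otimes C[0,1]$ with $\ev_1\circ h=\id$ and $\ev_0\circ h=j\circ i$, where $j(1)=1_{M_n}$. But here is the immediate obstacle: $M_n$ is simple, so for $n>1$ there is \emph{no} unital $*$-homomorphism $M_n\to\mathbb C$ at all, and in fact $M_n$ is not contractible to a point (the class of the unit $\mathbf 1\in K_0(M_n)=\mathbb Z$ is $n$, not $0$, so it cannot factor through $\mathbb C$ up to homotopy). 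So the route through Lemma \ref{L2} fails, and the proof must construct the section $\sigma:M_n\to M_n\otimes M_n\otimes C[0,1]$ directly, showing $p_0\simeq p_1$ as unital $*$-homomorphisms $M_n\to M_n\otimes M_n = M_{n^2}$.

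The key point is that $M_n\otimes M_n\cong M_{n^2}$, and $p_0,p_1$ are two unital $*$-homomorphisms into a full matrix algebra; any two such are unitarily equivalent, since $p_0$ and $p_1$ both send the matrix units $e_{kl}$ to systems of $n^2$ mutually orthogonal rank-$n$ partial isometries summing to the identity, and any two such embeddings of $M_n$ into $M_{n^2}$ differ by conjugation by a unitary $u\in M_{n^2}=M_n\otimes M_n$, i.e.\ $p_1=\Ad_u\circ p_0$. Then the plan is to connect $u$ to $1$ by a norm-continuous path of unitaries $u_t$, $t\in[0,1]$, with $u_0=1$, $u_1=u$ — possible because the unitary group of $M_{n^2}$ is path-connected (write $u=\exp(ih)$ for a self-adjoint $h$ and set $u_t=\exp(ith)$). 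Defining $\sigma(a)(t)=u_t\,p_0(a)\,u_t^*$ gives a unital $*$-homomorphism $M_n\to M_n\otimes M_n\otimes C[0,1]$ with $\ev_0\circ\sigma=p_0$ and $\ev_1\circ\sigma=p_1$, which is exactly condition (2) in the definition of $TC$ with $n=1$, $B_1=M_n\otimes M_n$, $q_1=\id$; condition (1) is trivial since $\Ker q_1=\{0\}$. Hence $TC(M_n)\le 1$, and since $TC$ is always $\ge 1$ (one needs at least one domain), $TC(M_n)=1$.

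The only genuine content is the claim that $p_0$ and $p_1$ are unitarily equivalent in $M_n\otimes M_n$; the rest is routine. This is a standard fact — two unital $*$-homomorphisms of $M_n$ into a unital $C^*$-algebra $D$ are unitarily equivalent whenever the two families of matrix units they determine are (which for $D=M_{n^2}$ is automatic by dimension count, comparing $[p_0(e_{11})]=[p_1(e_{11})]=n$ in $K_0(M_{n^2})=\mathbb Z$ and using that $M_{n^2}$ has cancellation), so I expect the write-up to simply cite or briefly recall it. I anticipate no real obstacle beyond making sure the path $u_t$ is taken in $M_n\otimes M_n$ and not in some larger algebra, so that $\sigma$ genuinely lands in $M_n\otimes M_n\otimes C[0,1]$ as required.
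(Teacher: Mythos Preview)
Your proposal is correct and follows essentially the same approach as the paper: both show $p_0$ and $p_1$ are conjugate by a unitary in $M_n\otimes M_n\cong M_{n^2}$, then connect that unitary to $1$ by a path and define $\sigma(a)(t)=\Ad_{u_t}(a\otimes 1)$. The only cosmetic difference is that the paper names the intertwining unitary explicitly as the flip $U$ (sending $e_i\otimes e_j$ to $e_j\otimes e_i$) rather than invoking the general uniqueness of unital embeddings $M_n\hookrightarrow M_{n^2}$.
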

\begin{proof}
Let $U$ be a unitary in $M_{n^2}\cong M_n\otimes M_n$ such that $\Ad_U$ is an automorphism of $M_{n^2}$ that interchanges $M_n\otimes 1$ with $1\otimes M_n$. If $M_n$ acts on an $n$-dimensional space $H_n$ with the orthonormal basis $\{e_i\}_{i=1}^n$ then $U$ interchanges vectors $e_i\otimes e_j$ and $e_j\otimes e_i$ when $i\neq j$. Let $U_t$, $t\in[0,1]$, be the path connecting $U$ with 1 constructed using the standard rotation formula. Define $\sigma:M_n\to M_n\otimes M_n\otimes C[0,1]$ by $\sigma(a)(t)=\Ad_{U_t}(a\otimes 1)$, $a\in M_n$. 

\end{proof}

The above example can be extended to UHF algebras:
\begin{prop}\label{L4}
If $A$ is a UHF algebra then $TC(A)=1$. 

\end{prop}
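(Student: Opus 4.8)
The plan is to bootstrap from the matrix-algebra case (Proposition \ref{L3}) using an approximation-plus-continuity argument. Write the UHF algebra $A$ as the closure of an increasing union $\bigcup_k M_{d_k}$, where each $M_{d_k} \subset M_{d_{k+1}}$ unitally and $A = \overline{\bigcup_k M_{d_k}}$. The essential point is that $A \otimes A = \overline{\bigcup_k M_{d_k} \otimes M_{d_k}}$, and on each finite stage we have a unitary implementing the flip; the difficulty is that these unitaries and their rotation paths need not be compatible as $k$ grows, so a naive direct limit of the homotopies $\sigma$ from Proposition \ref{L3} does not converge.

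First I would fix, for each $k$, a unitary $U^{(k)} \in M_{d_k} \otimes M_{d_k} \subset A \otimes A$ with $\Ad_{U^{(k)}}$ interchanging $M_{d_k} \otimes 1$ and $1 \otimes M_{d_k}$, together with a norm-continuous path $U^{(k)}_t$, $t \in [0,1]$, from $U^{(k)}_0 = 1$ to $U^{(k)}_1 = U^{(k)}$, exactly as in the proof of Proposition \ref{L3}. Each gives a $*$-homomorphism $\sigma^{(k)} : M_{d_k} \to M_{d_k} \otimes M_{d_k} \otimes C[0,1]$, $\sigma^{(k)}(a)(t) = \Ad_{U^{(k)}_t}(a \otimes 1)$, realizing $TC(M_{d_k}) = 1$ with endpoints $p_0$ and $p_1$. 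The key technical step is to choose the unitaries $U^{(k)}$ \emph{coherently}: since $M_{d_{k}} \otimes M_{d_k} \subset M_{d_{k+1}} \otimes M_{d_{k+1}}$ and the flip on the larger algebra restricts to the flip on the smaller one, one can arrange that $\Ad_{U^{(k+1)}}$ restricted to $M_{d_k} \otimes M_{d_k}$ agrees with $\Ad_{U^{(k)}}$, hence $U^{(k+1)} (U^{(k)})^{*}$ commutes with $M_{d_k} \otimes M_{d_k}$, i.e. lies in the relative commutant. I would then use the standard Murray--von Neumann/Glimm machinery for UHF algebras — the relative commutant of $M_{d_k} \otimes M_{d_k}$ in $M_{d_{k+1}} \otimes M_{d_{k+1}}$ is again a full matrix algebra, and is connected — to connect $U^{(k+1)} (U^{(k)})^{*}$ to $1$ within that relative commutant, which lets me splice the rotation paths so that $U^{(\cdot)}_{(\cdot)}$ stabilizes on each $M_{d_k} \otimes M_{d_k}$ as $k \to \infty$.

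With coherent choices in hand, the maps $\sigma^{(k)}$ are compatible: $\sigma^{(k+1)}|_{M_{d_k}} = \sigma^{(k)}$ after the splicing (viewing both as landing in $A \otimes A \otimes C[0,1]$). They therefore assemble to a $*$-homomorphism defined on the dense subalgebra $\bigcup_k M_{d_k}$, and because each $\sigma^{(k)}$ is a $*$-homomorphism it is isometric, so the common extension is isometric and extends by continuity to a $*$-homomorphism $\sigma : A \to A \otimes A \otimes C[0,1]$. Evaluating at the endpoints, $\ev_0 \circ \sigma(a) = a \otimes 1 = p_0(a)$ and $\ev_1 \circ \sigma(a) = 1 \otimes a = p_1(a)$ for all $a$ in the dense subalgebra, hence for all $a \in A$ by continuity. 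Taking $n = 1$, $B_1 = A \otimes A$, $q_1 = \id$ in the definition of topological complexity then yields $TC(A) = 1$; combined with $TC(A) \geq 1$ always, this gives equality.

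The main obstacle is precisely the coherence of the unitaries $U^{(k)}$ and their rotation paths — making the finite-stage solutions nest so that the direct limit of $*$-homomorphisms exists. Everything else (isometry of $*$-homomorphisms, continuity of the extension, checking the endpoint conditions) is routine. An alternative, perhaps cleaner, route would be to observe that a UHF algebra $A$ satisfies $A \cong A \otimes A$ and has approximately inner half flip, then upgrade ``approximately unitarily equivalent'' to ``homotopic'' using connectedness of the unitary group of $A \otimes A$ (true since $A \otimes A$ is again UHF, hence has $K_1 = 0$ and connected unitary group); I would mention this as the conceptual reason, but the direct inductive-limit construction above is the one I would write out in detail.
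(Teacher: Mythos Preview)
Your proposal is correct and follows essentially the same strategy as the paper: assemble $\sigma:A\to A\otimes A\otimes C[0,1]$ as a direct limit of the finite-stage homotopies $\sigma^{(k)}:M_{d_k}\to M_{d_k}\otimes M_{d_k}\otimes C[0,1]$, once these are made compatible with the connecting maps of the inductive system. The paper simply asserts that the relevant compatibility diagram commutes for its specific choice of rotation path and then passes to the limit; you instead isolate this coherence as the main technical point and explain how to enforce it via the connectedness of the relative commutant $(M_{d_k}\otimes M_{d_k})'\cap(M_{d_{k+1}}\otimes M_{d_{k+1}})\cong M_{(d_{k+1}/d_k)^2}$ --- arguably a more careful treatment of the same step. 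One caution on your alternative route: the isomorphism $A\cong A\otimes A$ holds only for UHF algebras of infinite type (those whose supernatural number has every prime exponent equal to $0$ or $\infty$), so that shortcut does not cover the general statement.
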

\begin{proof}
Let $n,k$ be integers, $\varphi:M_n\to M_{kn}$ a unital $*$-homomorphism. Let $\sigma':M_n\to M_n\otimes M_n\otimes C[0,1]$ and $\sigma'':M_{kn}\to M_{kn}\otimes M_{kn}\otimes C[0,1]$ be the maps constructed in the proof of Lemma \ref{L3},
$\sigma'(a')(t)=\Ad_{U'_t}(a'\otimes 1)$, $\sigma''(a'')(t)=\Ad_{U''_t}(a''\otimes 1)$, $a'\in M_n$, $a''\in M_{kn}$. 

Then the diagram  
\begin{equation}\label{333}
\begin{xymatrix}{
M_n \ar[r]^-{\sigma'}\ar[d]_-{\varphi} &M_n\otimes M_n\otimes C[0,1]\ar[d]^-{\varphi\otimes\varphi\otimes\id}\\
M_{kn}\ar[r]_-{\sigma''}  & M_{kn}\otimes M_{kn}\otimes C[0,1]}
\end{xymatrix}
\end{equation}
commutes. Let $A$ be the direct limit of matrix algebras $A_n=M_{m_n}$, where $m_n$ divides $m_{n+1}$, $n\in\mathbb N$. Commutativity of the diagram (\ref{333}) shows that the maps $\sigma^{(n)}:A_n\to A_n\otimes A_n\otimes C[0,1]$ agree, so, for any $t\in[0,1]$ one can define the limit map $\sigma_t:A\to A\otimes A$ such that $(\sigma_t)|_{A_n}=\ev_t\circ\sigma^{(n)}$. Since $\|\sigma_t(a)\|\leq\|a\|$ for any $a\in A$ and any $t\in[0,1]$, continuity of $\sigma_t(a)$ with respect to $t$ for $a\in\cup_{n=1}^\infty A_n$ implies continuity of $\sigma_t(a)$ for any $a\in A$. This means that the family $\{\sigma_t\}_{t\in[0,1]}$ defines a $*$-homomorphism $\sigma:A\to A\otimes A\otimes C[0,1]$, which provides the required homotopy.

\end{proof}

Let $\mathcal O_2$ be the Cuntz algebra generated by two isometries $s_1$, $s_2$ satisfying $s_1s_1^*+s_2s_2^*=1$. 

\begin{prop}
One has $TC(\mathcal O_2)=1$.

\end{prop}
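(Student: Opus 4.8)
The plan is to establish $TC(\mathcal{O}_2)=1$ directly from the definition with $n=1$: take $B_1=\mathcal{O}_2\otimes\mathcal{O}_2$ and $q_1=\id$, so that condition (1), $\cap_{i=1}^{1}\Ker q_i=\{0\}$, is automatic (and no unital $C^*$-algebra can have $TC=0$). Everything then reduces to condition (2) for $n=1$, i.e.\ to producing a $*$-homomorphism $\sigma:\mathcal{O}_2\to\mathcal{O}_2\otimes\mathcal{O}_2\otimes C[0,1]$ with $\ev_0\circ\sigma=p_0$ and $\ev_1\circ\sigma=p_1$; equivalently, to showing that the two factor embeddings $p_0,p_1:\mathcal{O}_2\to\mathcal{O}_2\otimes\mathcal{O}_2$ are homotopic through unital $*$-homomorphisms. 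By Cuntz's isomorphism $\mathcal{O}_2\otimes\mathcal{O}_2\cong\mathcal{O}_2$, fix such an isomorphism $\theta$; then $\theta\circ p_0$ and $\theta\circ p_1$ are unital $*$-endomorphisms of $\mathcal{O}_2$, and it suffices to show that every unital $*$-endomorphism of $\mathcal{O}_2$ is homotopic to $\id_{\mathcal{O}_2}$ (hence any two are homotopic to each other). Given a homotopy $H:\mathcal{O}_2\to\mathcal{O}_2\otimes C[0,1]$ with $\ev_0\circ H=\theta\circ p_0$ and $\ev_1\circ H=\theta\circ p_1$, the map $\sigma=(\theta^{-1}\otimes\id)\circ H$ then meets the requirement.

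The core input is Kirchberg's $\mathcal{O}_2$-uniqueness theorem: since $\mathcal{O}_2$ is separable, unital and nuclear, any two unital $*$-homomorphisms $\mathcal{O}_2\to\mathcal{O}_2$ are approximately unitarily equivalent, so in particular each unital $*$-endomorphism $\phi$ is approximately unitarily equivalent to $\id_{\mathcal{O}_2}$. The remaining task is to upgrade approximate unitary equivalence to a genuine homotopy, and this is where the $K$-theoretic triviality of $\mathcal{O}_2$ is used. Because $K_*(\mathcal{O}_2)=0$ and $\mathcal{O}_2$ lies in the bootstrap class, $\mathcal{O}_2$ is $KK$-equivalent to $0$; hence the $KK^1$-type obstruction that separates approximate from asymptotic unitary equivalence vanishes, and $\phi$ is \emph{asymptotically} unitarily equivalent to $\id_{\mathcal{O}_2}$, say $\lim_{t\to\infty}\|u_t\phi(a)u_t^*-a\|=0$ for all $a\in\mathcal{O}_2$ along a norm-continuous path of unitaries $(u_t)_{t\ge 0}$. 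Finally, $\mathcal{O}_2$ is purely infinite and simple with $K_1(\mathcal{O}_2)=0$, so its unitary group is connected: prepending a unitary path from $1$ to $u_0$, reparametrizing $[0,\infty)$ onto $[0,1)$, and filling in the limit value at $1$ produces a point-norm continuous path of unital $*$-endomorphisms of $\mathcal{O}_2$ with endpoints $\phi$ and $\id_{\mathcal{O}_2}$, i.e.\ the desired homotopy. (Equivalently, one may invoke the homotopy form of the Kirchberg--Phillips classification of $*$-homomorphisms between Kirchberg algebras: unital $*$-homomorphisms $\mathcal{O}_2\to\mathcal{O}_2$ are classified up to homotopy by their class in $KK(\mathcal{O}_2,\mathcal{O}_2)=0$.)

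The step I expect to be the real obstacle is precisely this passage from approximate unitary equivalence to a bona fide homotopy: approximately unitarily equivalent $*$-homomorphisms need not be homotopic in general, and it is the \emph{total} vanishing of the $K$-theory (equivalently $KK$-theory) of $\mathcal{O}_2$ that simultaneously kills the obstruction to asymptotic equivalence and the obstruction to connecting the implementing unitaries to $1$. A more hands-on alternative would be to work with the Cuntz generators of the two tensor factors and build an explicit path of unital endomorphisms of $\mathcal{O}_2\otimes\mathcal{O}_2$ sliding $\mathcal{O}_2\otimes 1$ onto $1\otimes\mathcal{O}_2$ through the approximately inner half flip of $\mathcal{O}_2$; but controlling continuity of the resulting concatenation of conjugations at the endpoint forces one back to the same $K$-theoretic input, so the classification route seems cleanest. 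Note, finally, that Lemma~\ref{K} poses no obstruction here: since $K_*(\mathcal{O}_2)=0$, the condition $K_*(\mathcal{O}_2)\otimes\mathbf 1=\mathbf 1\otimes K_*(\mathcal{O}_2)$ holds trivially, consistently with $TC(\mathcal{O}_2)=1$.
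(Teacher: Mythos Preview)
Your argument is correct, but it brings in far heavier machinery than necessary. You invoke Kirchberg's $\mathcal O_2$-uniqueness theorem to get approximate unitary equivalence of $\theta\circ p_0$ and $\theta\circ p_1$, and then appeal to $KK$-theoretic vanishing (or, equivalently, the homotopy form of Kirchberg--Phillips) to upgrade this to an honest homotopy. This works, but the paper's proof is entirely elementary by comparison: it writes down the explicit unitary $u=s_1^*\otimes s_1+s_2^*\otimes s_2\in\mathcal O_2\otimes\mathcal O_2$ and observes that $u\,p_0(s_j)=p_1(s_j)$ for $j=1,2$, so that for \emph{any} unitary $v\in\mathcal O_2\otimes\mathcal O_2$ the assignment $s_j\mapsto v\,p_0(s_j)$ extends to a unital $*$-homomorphism $\mathcal O_2\to\mathcal O_2\otimes\mathcal O_2$. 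Since $\mathcal O_2\otimes\mathcal O_2\cong\mathcal O_2$ and the unitary group of $\mathcal O_2$ is path-connected (indeed contractible), a norm-continuous path of unitaries from $1$ to $u$ yields the desired homotopy from $p_0$ to $p_1$ directly.

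In particular, your remark that the ``hands-on alternative'' via Cuntz generators ``forces one back to the same $K$-theoretic input'' is not accurate: because $u$ implements an \emph{exact} intertwining $u\,p_0(s_j)=p_1(s_j)$ (not merely an approximate one), no limit has to be controlled at the endpoint, and the only input beyond the explicit $u$ is the connectedness of $U(\mathcal O_2)$. Your route has the virtue of generality (it would apply to any Kirchberg algebra $A$ in the bootstrap class with $KK(A,A)=0$), while the paper's route is self-contained and avoids classification theory entirely.
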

\begin{proof}
Let $u=s_1^*\otimes s_1+s_2^*\otimes s_2\in\mathcal O_2\otimes\mathcal O_2$. It is unitary, and it suffices to check on generators that $p_0=\Ad_u p_1$ (cf. \cite{Rordam}, Theorem 5.1.2). But $\mathcal O_2\otimes\mathcal O_2\cong\mathcal O_2$ (\cite{Rordam}, Theorem 5.2.1), and the unitary group of $\mathcal O_2$ is contractible, hence, $p_0$ and $p_1$ are homotopic.

\end{proof}

\section{General case}\label{Section4}

Let $\mathbb K^+$ be the unitalized algebra of compact operators. In contrast with Lemma \ref{L3}, its topological complexity is infinite. This often happens for $C^*$-algebras with few ideals.

\begin{lem}
One has $TC(\mathbb K^+)=\infty$.

\end{lem}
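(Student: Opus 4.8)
The plan is to exploit the fact, hinted at in the remark preceding the statement, that $\K^+$ has very few ideals: any finite ``covering'' of $\K^+\otimes\K^+$ in the sense of the definition of $TC$ is forced to be essentially trivial, which would make $TC(\K^+)=1$, and this contradicts Lemma \ref{K}.

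The step I expect to require real work is pinning down the ideal lattice of $\K^+\otimes\K^+$. Write $H$ for the underlying separable Hilbert space. Since $\K$ is an ideal of $\K^+$, the set $\K\otimes\K\cong\K(H\otimes H)$ is a closed ideal of $\K^+\otimes\K^+$, and I would show it is contained in \emph{every} nonzero closed ideal. Indeed, realize $\K^+\otimes\K^+\subseteq B(H)\otimes B(H)\subseteq B(H\otimes H)$; if $J$ is a nonzero closed ideal of $\K^+\otimes\K^+$, pick $0\neq x\in J$ and, using that $\K(H\otimes H)$ is an essential ideal of $B(H\otimes H)$, choose $k\in\K\otimes\K$ with $xk\neq 0$. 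Then $xk\in J\cap(\K\otimes\K)$ is nonzero, and simplicity of $\K\otimes\K$ forces $\K\otimes\K\subseteq J$. Thus $\K\otimes\K$ is the smallest nonzero ideal of $\K^+\otimes\K^+$.

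Granting this, suppose $TC(\K^+)=n<\infty$, realized by quotient maps $q_i\colon\K^+\otimes\K^+\to B_i$ and $*$-homomorphisms $\sigma_i$ making the diagrams (\ref{diagram}) commute. Since $\cap_{i=1}^n\Ker q_i=\{0\}$ while every nonzero $\Ker q_i$ contains $\K\otimes\K\neq\{0\}$, at least one $\Ker q_i$ must equal $\{0\}$; being also surjective, this $q_i$ is an isomorphism, so composing $\sigma_i$ with $q_i^{-1}\otimes\id$ we may assume $B_i=\K^+\otimes\K^+$ and $q_i=\id$. For that $i$, commutativity of (\ref{diagram}) reads $\ev_0\circ\sigma_i=p_0$ and $\ev_1\circ\sigma_i=p_1$, so $\sigma_i$ is a homotopy between the two inclusions $p_0,p_1\colon\K^+\to\K^+\otimes\K^+$, i.e.\ $TC(\K^+)=1$.

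Finally I would contradict this via $K$-theory. Being separable and type I, $\K^+$ lies in the bootstrap class and satisfies the K\"unneth formula, so $K_*(\K^+)\otimes K_*(\K^+)\subseteq K_*(\K^+\otimes\K^+)$; and from the split exact sequence $0\to\K\to\K^+\to\mathbb C\to 0$ one gets $K_1(\K^+)=0$ and $K_0(\K^+)\cong\mathbb Z\oplus\mathbb Z$ with basis $\{e,\mathbf 1\}$, where $e$ is the class of a minimal projection of $\K$ and $\mathbf 1=[1]$. Then $e\otimes\mathbf 1$ lies in $K_*(\K^+)\otimes\mathbf 1$ but not in $\mathbf 1\otimes K_*(\K^+)$, so $K_*(\K^+)\otimes\mathbf 1\neq\mathbf 1\otimes K_*(\K^+)$, and Lemma \ref{K} gives $TC(\K^+)>1$ --- contradicting the previous paragraph. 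Hence $TC(\K^+)=\infty$. (One can also avoid the K\"unneth formula: if $TC(\K^+)=1$, then, since $\K^+$ carries a unital $*$-homomorphism onto $\mathbb C$, it would be contractible to a point and hence homotopy equivalent to $\mathbb C$, which is impossible because $K_0(\K^+)\cong\mathbb Z^2\neq\mathbb Z$.) The main obstacle is thus the ideal computation of the second paragraph; everything after it is a short $K$-theory argument.
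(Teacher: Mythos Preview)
Your proposal is correct and follows essentially the same approach as the paper: reduce finiteness of $TC(\K^+)$ to the case $TC(\K^+)=1$ by exploiting the ideal structure of $\K^+\otimes\K^+$, then rule out $TC(\K^+)=1$ via $K_0(\K^+)\cong\mathbb Z^2$ and Lemma~\ref{K}. The paper asserts without detail that one of the quotients must be $\K^+\otimes\K^+$ itself; you supply the missing justification (that $\K\otimes\K$ is the minimal nonzero ideal), and your alternative endgame via contractibility to a point is a nice bonus.
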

\begin{proof}
Let $B_1,\ldots, B_n$ be the quotients of $\mathbb K^+\otimes\mathbb K^+$. If they satisfy the definition of topological complexity then one of them must coincide with $\mathbb K^+\otimes\mathbb K^+$ itself, in which case other quotients are redundant. Therefore, if $TC(\mathbb K^+)\neq\infty$ then $TC(\mathbb K^+)=1$. To show that this is not the case, recall that $K_0(\mathbb K^+)\cong \mathbb Z^2$ and use Lemma \ref{K}.

\end{proof}

\begin{lem}\label{L5}
Let $TC(A)>1$. If $A$ is simple then $TC(A)=\infty$.

\end{lem}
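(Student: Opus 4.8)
The plan is to adapt the argument given for $\mathbb K^+$ above, the point being that for a simple algebra the list of admissible quotients $B_1,\dots,B_n$ is extremely restricted. The one nontrivial input is that the \emph{minimal} tensor product of two simple $C^*$-algebras is again simple (Takesaki's theorem on tensor products of simple $C^*$-algebras); applied to $A$ this shows that $A\otimes A$ is simple.

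Granting this, I would first record the elementary fact that a surjective $*$-homomorphism out of a simple $C^*$-algebra has kernel either $\{0\}$ or the whole algebra, so every quotient map $q_i:A\otimes A\to B_i$ appearing in the definition of $TC(A)$ is either the zero map (that is, $B_i=\{0\}$) or an isomorphism onto $B_i$. Now suppose $TC(A)=n<\infty$, witnessed by quotient maps $q_1,\dots,q_n$ and $*$-homomorphisms $\sigma_1,\dots,\sigma_n$ as in the definition. Since $A$ is unital and nonzero, $A\otimes A\neq\{0\}$, so the requirement $\bigcap_{i=1}^n\Ker q_i=\{0\}$ forbids all of the $q_i$ from being zero; fix an index $i$ with $q_i:A\otimes A\to B_i$ an isomorphism. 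Then
\[
\tilde\sigma:=(q_i^{-1}\otimes\id_{C[0,1]})\circ\sigma_i:A\longrightarrow (A\otimes A)\otimes C[0,1]
\]
satisfies $\ev_k\circ\tilde\sigma=q_i^{-1}\circ\ev_k\circ\sigma_i=q_i^{-1}\circ q_i\circ p_k=p_k$ for $k=0,1$; that is, $\tilde\sigma$ is a homotopy joining $p_0$ and $p_1$. Taking $n=1$, $B_1=A\otimes A$, $q_1=\id$, $\sigma_1=\tilde\sigma$ then meets all conditions of the definition, so $TC(A)=1$.

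Thus for simple $A$ one obtains the dichotomy $TC(A)\in\{1,\infty\}$; since by hypothesis $TC(A)>1$, it must be $\infty$. Every step beyond the invocation of simplicity of $A\otimes A$ is routine diagram-chasing, so the only substantive point — and the one to be careful about — is making sure that the simplicity hypothesis on $A$ genuinely passes to $A\otimes A$ for the minimal tensor product, which is exactly what Takesaki's theorem provides.
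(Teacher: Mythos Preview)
Your proof is correct and follows the same approach as the paper's: both invoke Takesaki's theorem to conclude that $A\otimes A$ is simple, so any nontrivial quotient $B_i$ must be $A\otimes A$ itself, reducing any finite witness to one with $n=1$. The paper's argument is a terse two-line sketch; you have simply filled in the routine details (in particular the explicit construction of $\tilde\sigma$ via $q_i^{-1}$).
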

\begin{proof}
It follows from \cite{Takesaki} that $A\otimes A$ is simple, hence any possible quotient $B$ must equal $A\otimes A$.

\end{proof}

It follows that topological complexity distinguishes commutative $C^*$-algebras from their non-commutative deformations. 
For example, consider an irrational rotation algebra $A_\theta$, $\theta\in[0,1]\setminus\mathbb Q$, often called a non-commutative torus. It is simple and has the same $K$-theory as the usual torus $\mathbb T^2$ \cite{Davidson}, hence $TC(A_\theta)=\infty$, while for a usual torus $\mathbb T^2$ one has $TC(C(\mathbb T^2))=3$ (cf. \cite{Farber_survey}, Example 16.4). 

Nevertheless, tensoring by matrices does not increase topological complexity.

\begin{prop}\label{L9}
For any compact Hausdorff space $X$, one has $TC(C(X)\otimes M_n)\leq TC(C(X))$.

\end{prop}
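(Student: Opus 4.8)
The plan is to start from an optimal motion planning decomposition for $C(X)$ and lift it to $C(X)\otimes M_n$ essentially unchanged, taking advantage of the fact that the extra matrix factor can be transported along a path using the unitary rotation trick from Proposition~\ref{L3}. Suppose $TC(C(X))=m$, witnessed by quotients $q_i\colon C(X)\otimes C(X)\to C(V_i)$ with $\cap_i\Ker q_i=\{0\}$ and sections $\sigma_i\colon C(X)\to C(V_i)\otimes C[0,1]$ making the diagrams \eqref{diagram} commute. Tensoring everything by $M_n$, set $A=C(X)\otimes M_n$ and note $A\otimes A\cong C(X)\otimes C(X)\otimes M_n\otimes M_n$. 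Define $B_i=C(V_i)\otimes M_n\otimes M_n$ and $\tilde q_i=q_i\otimes\id_{M_n\otimes M_n}\colon A\otimes A\to B_i$; these are surjective and $\cap_i\Ker\tilde q_i=(\cap_i\Ker q_i)\otimes M_n\otimes M_n=\{0\}$, so condition (1) holds with $m$ quotients.

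The issue is condition (2): I need $*$-homomorphisms $\tilde\sigma_i\colon A\to B_i\otimes C[0,1]$ with $\ev_0\circ\tilde\sigma_i=\tilde q_i\circ p_0$ and $\ev_1\circ\tilde\sigma_i=\tilde q_i\circ p_1$, where now $p_0(a)=a\otimes 1$, $p_1(a)=1\otimes a$ for $a\in A$. On the $C(X)$-part the original $\sigma_i$ interpolates between $f\otimes 1$ and $1\otimes f$; on the $M_n\otimes M_n$-part I need to interpolate between $b\otimes 1_{M_n}$ and $1_{M_n}\otimes b$ for $b\in M_n$. Exactly this is supplied by the path $U_t$ of unitaries in $M_n\otimes M_n$ used in the proof of Proposition~\ref{L3}, with $U_0=1$ and $\Ad_{U_1}$ the flip. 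The natural guess is to take the "tensor product of homotopies": define, for $a=f\otimes b\in C(X)\otimes M_n$ (extended by linearity and continuity),
\[
\tilde\sigma_i(f\otimes b)(t)=\sigma_i(f)(t)\otimes \Ad_{U_t}(b\otimes 1_{M_n}),
\]
viewed as an element of $C(V_i)\otimes C[0,1]\otimes M_n\otimes M_n\cong B_i\otimes C[0,1]$. At $t=0$ this gives $q_i(f\otimes 1)\otimes(b\otimes 1_{M_n})=\tilde q_i(p_0(f\otimes b))$, and at $t=1$ it gives $q_i(1\otimes f)\otimes(1_{M_n}\otimes b)=\tilde q_i(p_1(f\otimes b))$, as required; continuity in $t$ is clear since both factors are continuous.

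The one point that needs care — and the main obstacle — is checking that $\tilde\sigma_i$ is genuinely a well-defined $*$-homomorphism on the tensor product $C(X)\otimes M_n$, not merely on elementary tensors. Since $\sigma_i$ and $b\mapsto\Ad_{U_t}(b\otimes 1)$ are each $*$-homomorphisms, their "external" tensor product $\sigma_i\otimes(\Ad_{U\cdot}(-\otimes 1))$ is a well-defined $*$-homomorphism $C(X)\otimes M_n\to (C(V_i)\otimes C[0,1])\otimes(M_n\otimes M_n)$ by the universal property of the minimal tensor product; one then reshuffles the tensor factors via the canonical isomorphism $(C(V_i)\otimes C[0,1])\otimes(M_n\otimes M_n)\cong(C(V_i)\otimes M_n\otimes M_n)\otimes C[0,1]=B_i\otimes C[0,1]$ and composes with it. So the construction is legitimate, and the commuting-square identities above finish the proof, giving $TC(A)\le m=TC(C(X))$. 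If $TC(C(X))=\infty$ there is nothing to prove.
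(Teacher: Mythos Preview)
Your proof is correct and follows essentially the same approach as the paper: tensor the given quotients $q_i$ by $\id_{M_n\otimes M_n}$ and build $\tilde\sigma_i$ as the tensor product of $\sigma_i$ with the unitary rotation homotopy $b\mapsto \Ad_{U_t}(b\otimes 1)$ from Proposition~\ref{L3}. The only cosmetic point is that the external tensor product naturally lands in $(C(V_i)\otimes C[0,1])\otimes(M_n\otimes M_n\otimes C[0,1])$ with two $C[0,1]$ factors, so one should compose with the diagonal restriction $C([0,1]^2)\to C[0,1]$ to obtain the map into $B_i\otimes C[0,1]$; your formula already does this implicitly by using the same parameter $t$ in both factors.
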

\begin{proof}
Let $TC(C(X))=k$, and let $q_i:C(X)\otimes C(X)\to B_i$ and $\sigma_i:C(X)\to B_i\otimes C[0,1]$, $i=1,\ldots,k$, be as in the definition of topological complexity. Set $\overline{B}_i=B_i\otimes M_n\otimes M_n$, $\overline{q}_i=q_i\otimes\id:C(X)\otimes C(X)\otimes M_n\otimes M_n\to \overline{B}_i$. Define $\overline{\sigma}_i:C(X)\otimes M_n\to \overline{B}_i\otimes C[0,1]$ by $\overline{\sigma}_i(f\otimes m)(t)=\sigma_i(f)\otimes \Ad_{U_t}(m\otimes 1)\in B_i\otimes C[0,1]\otimes M_n\otimes M_n$, $f\in C(X)$, $m\in M_n$, $t\in[0,1]$, and $U_t$ as in the proof of Lemma \ref{L3}. Then the maps $\overline{q}_i$, $\overline{\sigma}_i$ make the corresponding diagrams commute, hence $TC(C(X)\otimes M_n)\leq TC(C(X))$. 

\end{proof}

More generally, one has
\begin{prop}
Let $TC(A)=n$, $TC(C)=m$. Then $TC(A\otimes C)\leq nm$.

\end{prop}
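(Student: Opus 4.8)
The plan is to take the data witnessing $TC(A)=n$ and $TC(C)=m$ and form their ``product''. Write $q_i:A\otimes A\to B_i$ and $\sigma_i:A\to B_i\otimes C[0,1]$ for $i=1,\dots,n$, and $r_j:C\otimes C\to D_j$ and $\tau_j:C\to D_j\otimes C[0,1]$ for $j=1,\dots,m$, for the quotient maps and path-lifting $*$-homomorphisms supplied by the definition. Fix the canonical isomorphism $(A\otimes C)\otimes(A\otimes C)\cong(A\otimes A)\otimes(C\otimes C)$ transposing the two inner factors; under it the corner inclusions become $p_0^{A\otimes C}(a\otimes c)=a\otimes 1_A\otimes c\otimes 1_C$ and $p_1^{A\otimes C}(a\otimes c)=1_A\otimes a\otimes 1_C\otimes c$. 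Then set $\overline B_{ij}=B_i\otimes D_j$ and $\overline q_{ij}=q_i\otimes r_j:(A\otimes A)\otimes(C\otimes C)\to\overline B_{ij}$; these are $nm$ surjective $*$-homomorphisms, and the point is to check that they, together with suitable $\overline\sigma_{ij}$, meet conditions (1) and (2) of the definition of topological complexity.

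For condition (1): since $\bigcap_i\Ker q_i=\{0\}$ and $\bigcap_j\Ker r_j=\{0\}$, the ``packaged'' maps $(q_1,\dots,q_n):A\otimes A\to\bigoplus_iB_i$ and $(r_1,\dots,r_m):C\otimes C\to\bigoplus_jD_j$ are injective. The minimal tensor product preserves injectivity of $*$-homomorphisms, so their tensor product
\[
(A\otimes A)\otimes(C\otimes C)\longrightarrow\left(\bigoplus_iB_i\right)\otimes\left(\bigoplus_jD_j\right)\cong\bigoplus_{i,j}B_i\otimes D_j
\]
is injective, and its $(i,j)$-component is exactly $\overline q_{ij}$. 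Hence any element annihilated by every $\overline q_{ij}$ lies in the kernel of an injective map, so $\bigcap_{i,j}\Ker\overline q_{ij}=\{0\}$.

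For condition (2), I would define $\overline\sigma_{ij}:A\otimes C\to\overline B_{ij}\otimes C[0,1]$ as the composition of $\sigma_i\otimes\tau_j:A\otimes C\to(B_i\otimes C[0,1])\otimes(D_j\otimes C[0,1])$, the reordering isomorphism onto $\overline B_{ij}\otimes\big(C[0,1]\otimes C[0,1]\big)=\overline B_{ij}\otimes C([0,1]^2)$, and $\id_{\overline B_{ij}}\otimes\Delta^*$, where $\Delta^*:C([0,1]^2)\to C[0,1]$ is restriction to the diagonal $t\mapsto(t,t)$. Evaluating $\overline\sigma_{ij}$ at $t=0$ evaluates both $C[0,1]$ coordinates at $0$, so on an elementary tensor
\begin{multline*}
\ev_0\circ\overline\sigma_{ij}(a\otimes c)=(\ev_0\circ\sigma_i)(a)\otimes(\ev_0\circ\tau_j)(c)\\
=q_i(a\otimes 1_A)\otimes r_j(c\otimes 1_C)=\overline q_{ij}\big(p_0^{A\otimes C}(a\otimes c)\big),
\end{multline*}
using the defining relations $\ev_0\circ\sigma_i=q_i\circ p_0$, $\ev_0\circ\tau_j=r_j\circ p_0$ and the identification above; the case $t=1$ is identical with $1_A\otimes a$, $1_C\otimes c$ replacing $a\otimes 1_A$, $c\otimes 1_C$. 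Since both $\ev_k\circ\overline\sigma_{ij}$ and $\overline q_{ij}\circ p_k^{A\otimes C}$ are $*$-homomorphisms on $A\otimes C$ agreeing on elementary tensors, they coincide, so the diagrams of the definition commute for $(\overline B_{ij},\overline q_{ij},\overline\sigma_{ij})$ and $TC(A\otimes C)\le nm$.

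The main obstacle is the equality $\bigcap_{i,j}\Ker\overline q_{ij}=\{0\}$: this is exactly where one must use that the minimal tensor product preserves injectivity (it would fail for the maximal tensor product), and it is cleanest to route through the injectivity of the packaged maps into finite direct sums rather than through an explicit description of $\Ker(q_i\otimes r_j)$. Everything else is routine bookkeeping with the evaluation homomorphisms and the diagonal embedding $\Delta^*$ of $C([0,1]^2)$ into $C[0,1]$.
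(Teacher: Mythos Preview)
Your proof is correct and follows essentially the same route as the paper: product quotients $B_i\otimes D_j$ with $q_i\otimes r_j$, and homotopies $\sigma_i\otimes\tau_j$ restricted to the diagonal via $\Delta^*:C([0,1]^2)\to C[0,1]$. In fact you are more thorough than the paper, which checks only the commuting diagrams and leaves condition~(1) implicit; your argument via injectivity of the packaged maps and preservation of injectivity by the minimal tensor product fills that gap cleanly.
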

\begin{proof}
Let $q^A_i:A\otimes A\to B_i$, $\sigma^A_i:A\to B_i\otimes C[0,1]$, $i=1,\ldots,n$, and $q^C_j:C\otimes C\to D_j$, $\sigma^C_j:C\to D_j\otimes C[0,1]$, $j=1,\ldots,m$, be as in the definition of topological complexity. Let $\Delta:C([0,1]^2)\to C[0,1]$ be the map induced by the diagonal embedding $[0,1]\to[0,1]^2$ and define the composition 

$$
\begin{xymatrix}{
\sigma_{ij}:A\otimes C\ar[rr]^-{\sigma^A_i\otimes\sigma^C_j}&&B_i\otimes D_j\otimes C([0,1]^2)\ar[rr]^-{\id\otimes\Delta}&&B_i\otimes D_j\otimes C[0,1].
}\end{xymatrix}
$$

Then the diagram 
$$
\begin{xymatrix}{
A\otimes C \ar[rr]^-{p^A_k\otimes p^C_k}\ar[d]_-{\sigma_{ij}} &&A\otimes C\otimes A\otimes C\ar[d]^-{q^A_i\otimes q^C_j}\\
B_i\otimes D_j\otimes C[0,1] \ar[rr]_-{\ev_k}  && B_i\otimes D_j,}
\end{xymatrix}
$$
$k=0,1$, commutes for all $i$, $j$.

\end{proof}

Remark that in the commutative case the tensor product of $C^*$-algebras is Gelfand dual to the product of spaces, and 
there is a much better estimate $TC(A\otimes C)\leq n+m-1$ (\cite{Farber}, Theorem 11).

We have no examples with $TC(C(X)\otimes M_n)<TC(C(X))$, but tensoring by a more general $C^*$-algebra may decrease topological complexity. Let $U(A)$ denote the group of unitaries of a $C^*$-algebra $A$. Recall that $U(\mathcal O_2)$ is contractible \cite{Thomsen}.

Let $\mathbb S$ denote the circle. It is known that $TC(C(\mathbb S))=2$. 

\begin{thm}
Let $A$ satisfy $TC(A)=1$, $\pi_0(U(A))=\pi_1(U(A))=0$ $($e.g. $A=\mathcal O_2$$)$. Then $TC(C(\mathbb S)\otimes A)=1$.

\end{thm}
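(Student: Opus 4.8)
The plan is to construct an explicit single section, i.e. a homotopy between the two embeddings $p_0,p_1:C(\mathbb S)\otimes A\to (C(\mathbb S)\otimes A)\otimes(C(\mathbb S)\otimes A)$. Using the isomorphism $(C(\mathbb S)\otimes A)^{\otimes 2}\cong C(\mathbb S\times\mathbb S)\otimes A\otimes A$ and the flip, it suffices to connect $p_0$ to $p_1$ through $*$-homomorphisms. I would handle the two tensor factors separately and then splice: on the $A$-factor, $TC(A)=1$ provides a homotopy $\sigma^A:A\to A\otimes A\otimes C[0,1]$ from $a\otimes 1$ to $1\otimes a$; on the $C(\mathbb S)$-factor, the obstruction to $TC(C(\mathbb S))=1$ is exactly the degree-$1$ winding of the coordinate unitary $z\in C(\mathbb S)$ — geometrically, the path in $\mathbb S\times\mathbb S$ from $(\theta,\theta)$ to $(\theta,\theta)$ that must wind around. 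The key idea is that this winding can be "absorbed" by a non-contractible loop of unitaries in $A\otimes A$, which does not exist for $A=\mathbb C$ but does once $\pi_1(U(A))=0$ forces all such loops to be trivial — wait, that is the wrong direction; rather, the hypothesis $\pi_0(U(A))=\pi_1(U(A))=0$ makes the unitary group of $A\otimes A$ (hence of $C(\mathbb S)\otimes A\otimes A$) have enough room that the winding unitary bounds a disk of unitaries.

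Concretely, here is the construction I would write down. Think of $\mathbb S=[0,1]/{\sim}$ and let $u\in C(\mathbb S)$ be the generating unitary $u(x)=e^{2\pi i x}$. In $C(\mathbb S)\otimes C(\mathbb S)\otimes A\otimes A$ we need a unitary $W$ with $W(u\otimes 1\otimes 1\otimes 1)W^*=1\otimes u\otimes 1\otimes 1$ and $W(1\otimes 1\otimes a\otimes 1)W^*=1\otimes 1\otimes 1\otimes a$ simultaneously, together with a path from $W$ to $1$. For the $A$-part one would use the homotopy $\sigma^A$ from $TC(A)=1$ directly rather than a unitary. For the $C(\mathbb S)$-part, the obstruction is that the "flip on $\mathbb S\times\mathbb S$" permuting coordinates is not homotopic to the identity through basepoint-preserving maps, equivalently the relevant element of $U(C(\mathbb S\times\mathbb S))$ has winding number $\pm1$. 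The trick: realize this winding inside $U(A)$. Since $\pi_0(U(A))=0$, $A$ has a continuous path of unitaries $v:\mathbb S\to U(A)$ with arbitrary behavior, and since $\pi_1(U(A))=0$ any loop $\mathbb S\to U(A)$ extends to the disk, so I can find $v:[0,1]\times[0,1]\to U(A)$ with $v(\cdot,0)$ a chosen winding loop and $v(\cdot,1)\equiv 1$. Combining: define $\sigma:C(\mathbb S)\otimes A\to C(\mathbb S)\otimes C(\mathbb S)\otimes A\otimes A\otimes C[0,1]$ by sending $u\mapsto$ a path that starts at $u\otimes 1$, uses $v$ to unwind into $A$ and back out as $1\otimes u$, while on $A$ it runs $\sigma^A$; continuity at the gluing points is checked exactly as in Lemma~\ref{L2}, using the scalar-valued endpoint maps.

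The step I expect to be the main obstacle is making the two homotopies — the unwinding of the circle coordinate through $U(A)$ and the $TC(A)=1$ homotopy $\sigma^A$ — compatible as a single $*$-homomorphism rather than just as a pointwise family, because a priori $u$ and the elements of $A$ commute but the intermediate unitaries $v(s,t)\in A$ and the path of circle-unitaries need to be interleaved without creating spurious relations; one must verify that the universal relations of $C(\mathbb S)\otimes A$ (namely: a unitary commuting with $A$) are preserved at every time $t$. I would resolve this by writing $\sigma_t$ as conjugation by a carefully chosen unitary $W_t\in U(C(\mathbb S)\otimes C(\mathbb S)\otimes A\otimes A)$ of the form $W_t = (\text{circle-flip factor depending on }v(\cdot,t))\cdot(\text{factor implementing }\sigma^A_t)$, checking that each $W_t$ genuinely intertwines what it should, and that $W_0=1$. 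The verification that $\pi_0(U(A))=0$ suffices to connect the endpoint unitary to $1$, together with $\pi_1(U(A))=0$ to fill the square, is the technical heart; everything else is bookkeeping with evaluation maps as in the proofs of Lemma~\ref{L2} and Proposition~\ref{L3}.
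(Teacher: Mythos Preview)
You have correctly isolated the two ingredients --- the homotopy $\sigma^A$ coming from $TC(A)=1$ for the $A$-factor, and the null-homotopy of the loop $\theta\mapsto e^{2\pi i\theta}1_A$ in $U(A)$ (equivalently, a path in $U(C(\mathbb S)\otimes A)$ from $u\otimes 1$ to $1$, which exists precisely because $\pi_0(U(A))=\pi_1(U(A))=0$) for the circle factor --- and you have correctly identified the commutation constraint as the only real obstacle. However, your proposed resolution does not work as stated: you end by writing $\sigma_t=\Ad_{W_t}$ for a unitary path $W_t$ containing a ``factor implementing $\sigma^A_t$'', but the hypothesis $TC(A)=1$ gives only a homotopy of $*$-homomorphisms $a\otimes 1\simeq 1\otimes a$, not an \emph{inner} one, so in general no such unitary factor exists. (For $\mathcal O_2$ the half-flip happens to be inner, but the theorem is stated for arbitrary $A$.) You even note earlier that $\sigma^A$ should be used ``directly rather than a unitary'', so the resolution contradicts your own diagnosis.

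The paper sidesteps the whole issue by running the two homotopies \emph{sequentially} rather than simultaneously, in three stages. Write the target as $C(\mathbb S)_1\otimes A_1\otimes C(\mathbb S)_2\otimes A_2$. On $[0,1/3]$ keep $a$ fixed in the slot $A_1$ and contract $u$ to $1$ along a unitary path $u_t$ living in the cross factor $C(\mathbb S)_1\otimes A_2$; on $[1/3,2/3]$ keep $u\mapsto 1$ and move $a$ from $A_1$ to $A_2$ via $\sigma^A$; on $[2/3,1]$ keep $a$ in $A_2$ and let $u$ re-emerge along a path in the other cross factor $C(\mathbb S)_2\otimes A_1$. The point is that in the first and third stages the unwinding path sits in a tensor slot disjoint from the one carrying $a$, so commutation is automatic and no global conjugating unitary is needed. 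Your simultaneous scheme could be repaired only by confining the $u$-path to tensor slots disjoint from the range of $\sigma^A_t$ for \emph{every} $t$; since $\sigma^A_t(A)$ sweeps through $A\otimes A$, that would force the $u$-path back into $C(\mathbb S)\otimes C(\mathbb S)$ alone, where the winding obstruction reappears.
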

\begin{proof}
We have to connect by a homotopy the two $*$-homomorphisms $\sigma_i:C(\mathbb S)\otimes A\to C(\mathbb S)\otimes A\otimes C(\mathbb S)\otimes A$, $i=0,1$, given by $\sigma_0(f\otimes a)=f\otimes a\otimes 1\otimes 1$ and $\sigma_1(f\otimes a)=1\otimes 1\otimes f\otimes a$, $f\in C(\mathbb S)$, $a\in A$. Note that these maps are determined by their values on $u\otimes a$, where $u(x)=e^{2\pi ix}$, $u\in C(\mathbb S)$. By assumption, any unitary in $C(\mathbb S)\otimes A$ has a homotopy that connects it with $1\otimes 1$. Let $u_t$, $t\in[2/3,1]$, be a homotopy, in the unitary group of $C(\mathbb S)\otimes A$, that connects $u\otimes 1$ with $1\otimes 1$. Then the homotopy $\sigma_t$, given by $\sigma_t(u\otimes a)=1\otimes u_t\otimes a$ connects $\sigma_1$ with $\sigma_{2/3}$ given by $\sigma_{2/3}(u\otimes a)=1\otimes 1\otimes 1\otimes a$. Similarly, one can connect $\sigma_0$ with $\sigma_{1/3}$ given by $\sigma_{1/3}(u\otimes a)=1\otimes a\otimes 1\otimes 1$. Finally, as $TC(A)=1$, $\sigma_{1/3}$ and $\sigma_{2/3}$ are homotopic.

\end{proof}

Our next examples show how sensitive topological complexity may be. Let 
$$
A_2=\{f\in C([0,1];M_2):f(1)\mbox{\ is\ diagonal}\}.
$$
This algebra is considered as a noncommutative version of the non-Hausdorff $T_1$ space $X_2$ obtained from two intervals $\{(x,y)\in[0,1]^2:y=0\mbox{\ or\ 1}\}$ by identifying the points $(x,0)$ and $(x,1)$ for each $x\in[0,1)$ \cite{Connes}. Although $X_2$ is not Hausdorff, it is contractible, hence $TC(X_2)=1$.

\begin{lem}\label{L7}
One has $TC(A_2)=\infty$.

\end{lem}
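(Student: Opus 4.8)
The plan is to show that no finite collection of quotients $B_1,\dots,B_n$ of $A_2\otimes A_2$ can satisfy both conditions of the definition, by analyzing what the homotopy condition forces on the fiber structure and deriving an obstruction akin to the one in Lemma~\ref{L6}. The algebra $A_2$ is a noncommutative CW complex whose "boundary" fiber at $1$ splits as $\mathbb C\oplus\mathbb C$, so $A_2\otimes A_2$ has a quotient (namely the fiber over $(1,1)\in[0,1]^2$) isomorphic to $(\mathbb C^2)\otimes(\mathbb C^2)\cong\mathbb C^4$. The idea is that restricting the data of a motion planning decomposition to this $\mathbb C^4$-corner reproduces exactly the obstruction from Lemma~\ref{L6}: the two coordinate inclusions of $\mathbb C^2$ into $\mathbb C^4$ become homotopic only if the relevant off-diagonal projections vanish, which contradicts $\cap_i\Ker q_i=\{0\}$.

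Concretely, first I would set up notation: write $\ev_s\colon A_2\to M_2$ for evaluation at $s\in[0,1)$ and $\ev_1\colon A_2\to\mathbb C^2$ for the diagonal evaluation at the endpoint; let $\pi^{(1)},\pi^{(2)}\colon\mathbb C^2\to\mathbb C$ be the two coordinate projections. Next, given any $q_i\colon A_2\otimes A_2\to B_i$ and $\sigma_i\colon A_2\to B_i\otimes C[0,1]$ as in the definition, I would push everything forward along the evaluation $A_2\otimes A_2\to \mathbb C^2\otimes\mathbb C^2$ at the corner $(1,1)$; this identifies a quotient $\bar B_i$ of $B_i$ through which a compatible system still lives, and in $\bar B_i$ the four projections $e_1=\bar q_i(1_{\pi^{(1)}}\otimes 1_{\pi^{(1)}})$, etc., play the role of $e_1,\dots,e_4$ in Lemma~\ref{L6}. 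The key step is to observe that the homotopy $\sigma_i$, after composing with $\ev_0$ and $\ev_1$ and with the corner quotient, must connect $\bar q_i\circ p_0$ with $\bar q_i\circ p_1$ \emph{through projections in $\bar B_i$}; applying this to the element $a\in A_2$ that evaluates to $1\oplus 0$ in the endpoint fiber forces (as in Lemma~\ref{L6}) the vanishing $e_2=e_3=0$. Since this holds for every $i$, the elements of $A_2\otimes A_2$ whose images in $\mathbb C^2\otimes\mathbb C^2$ are $1_{\pi^{(1)}}\otimes 1_{\pi^{(2)}}$ and $1_{\pi^{(2)}}\otimes 1_{\pi^{(1)}}$ lie in every $\Ker q_i$; but these are nonzero in $A_2\otimes A_2$ (their images in the corner quotient are nonzero), contradicting condition~(1).

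The one point that needs care — and which I expect to be the main obstacle — is justifying that the homotopy survives the passage to the corner quotient in the right way: we need that after applying $\ev_1\otimes\ev_1$ to $A_2\otimes A_2$ and the induced quotient map $B_i\to\bar B_i$, the diagrams~(\ref{diagram}) still commute with a genuine continuous path of projections, so that the Lemma~\ref{L6} argument applies verbatim. This is a routine functoriality check once one notes that $\ev_1\otimes\ev_1$ kills enough of $A_2\otimes A_2$ and that quotients of $B_i\otimes C[0,1]$ are of the form $\bar B_i\otimes C[0,1]$; the projections $e_k\bar e(t)e_k$ remain norm-continuous in $t$, so the homotopy-of-projections step goes through. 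The rest is bookkeeping identical to the proof of Lemma~\ref{L6}. Thus $TC(A_2)=\infty$, in sharp contrast with $TC(X_2)=1$.
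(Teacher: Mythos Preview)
Your overall strategy---restrict to the ``bad'' corner where the fiber is $\mathbb C^2\otimes\mathbb C^2$ and run the Lemma~\ref{L6} argument---is the right idea and is close to what the paper does. But the final deduction has a real gap. From $e_2=0$ in $\bar B_i$ you conclude that ``the elements of $A_2\otimes A_2$ whose images in $\mathbb C^2\otimes\mathbb C^2$ are $1_{\pi^{(1)}}\otimes 1_{\pi^{(2)}}$ lie in every $\Ker q_i$''. That does not follow. Unwinding the definitions, $e_2=0$ in $\bar B_i$ means only that $q_i(a\otimes(1-a))$ lies in $q_i(\Ker(\ev_1\otimes\ev_1))$, i.e.\ that there exists \emph{some} $P_i\in\Ker q_i$ with $(\ev_1\otimes\ev_1)(P_i)=1_{\pi^{(1)}}\otimes 1_{\pi^{(2)}}$. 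These witnesses $P_i$ depend on $i$, so you cannot produce a single nonzero element in $\bigcap_i\Ker q_i$, and condition~(1) is not contradicted. Equivalently: $e_2=0$ in $\bar B_i$ is exactly the statement that the character $r_0\otimes r_1$ does \emph{not} factor through $B_i$---nothing about membership in $\Ker q_i$.

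The missing ingredient is primeness. The paper's proof sidesteps your $\mathbb C^4$-corner construction and works directly with the single character $r_0\otimes r_1\colon A_2\otimes A_2\to\mathbb C$ (where $r_0(f)=f_{11}(1)$, $r_1(f)=f_{22}(1)$). Its kernel is primitive, hence prime; since $\bigcap_i\Ker q_i=0\subset\Ker(r_0\otimes r_1)$, some $\Ker q_{i_0}\subset\Ker(r_0\otimes r_1)$, so $r_0\otimes r_1$ factors through $B_{i_0}$. Composing $\sigma_{i_0}$ with this factor gives a homotopy in $\mathbb C$ between $(r_0\otimes r_1)\circ p_0$ and $(r_0\otimes r_1)\circ p_1$; evaluated at $a=\left(\begin{smallmatrix}1&0\\0&0\end{smallmatrix}\right)$ these are $1$ and $0$, contradiction. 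Your argument can be repaired by exactly this move: instead of trying to push elements into $\Ker q_i$, observe that ``$e_2=0$ in $\bar B_i$ for all $i$'' says that $r_0\otimes r_1$ factors through no $B_i$, and then invoke primeness to derive the contradiction with $\bigcap_i\Ker q_i=0$.
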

\begin{proof}
Suppose that $TC(A_2)=n<\infty$. Let $q_i:A_2\otimes A_2\to B_i$, $i=1,\ldots,n$, be as in the definition of topological complexity. There are two $*$-homomorphisms from $A_2$ to $\mathbb C$, given by $r_0(f)=f_{11}(1)$ and $r_1(f)=f_{22}(1)$, where $f\in A_2$. It is easy to see that each quotient map from $A_2$ factorizes through the restriction map on
a closed subset of $[0,1]^2$. As $\cap_{i=1}^n\Ker q_i=\{0\}$, there is at least one $i$ such that $r_0\otimes r_1$ factorizes through $q_i$. Further, we may argue as in Lemma \ref{L6}: the maps $(r_0\otimes r_1)\circ p_0$ and $(r_0\otimes r_1)\circ p_1$ from $A_2$ to $\mathbb C$ shoud be homotopic. Let $a=\left(\begin{smallmatrix}1&0\\0&0\end{smallmatrix}\right)\in A_2$. Then $(r_0\otimes r_1)\circ p_0(a)=1$, $(r_0\otimes r_1)\circ p_1(a)=0$, which makes homotopy between $(r_0\otimes r_1)\circ p_0$ and $(r_0\otimes r_1)\circ p_1$ impossible.

\end{proof}

Let 
$$
D_n=\{f\in C([0,1]; M_n): f(0),f(1)\mbox{\ are\ scalars}\}
$$ 
be a (unital) dimension-drop algebra. 

\begin{lem}\label{DDT}
If $n>1$ then $TC(D_n)=\infty$.

\end{lem}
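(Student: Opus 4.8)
The plan is to mimic the obstruction argument of Lemma \ref{L7}, using that $D_n$ admits a one-parameter family of characters and that the dimension-drop structure at the endpoints forces an incompatibility that no homotopy can absorb. Concretely, since $n>1$, the algebra $D_n$ has two distinguished evaluation-type $*$-homomorphisms to $\mathbb C$: writing $\varepsilon_0(f)=f(0)$ and $\varepsilon_1(f)=f(1)$ (each a scalar), these are unital characters. More importantly, $K_0(D_n)$ is nontrivial in a way that distinguishes $p_0$ from $p_1$; indeed, it is classical that $D_n$ has $K_0(D_n)\cong\mathbb Z$ generated by the unit but with a torsion phenomenon in $K_1$, or — more robustly for our purposes — that $D_n$ is \emph{not} contractible to a point while admitting a character. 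I would first record that $D_n$ lies in the bootstrap class (it is type I, even subhomogeneous), so it satisfies the Künneth formula and $K_*(D_n)\otimes K_*(D_n)\subset K_*(D_n\otimes D_n)$, which puts Lemma \ref{K} at our disposal.

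First I would compute, or quote, $K_*(D_n)$: using the short exact sequence $0\to SM_n\to D_n\to\mathbb C\oplus\mathbb C\to 0$ (the suspension of $M_n$ sitting inside as the functions vanishing at both endpoints), the six-term exact sequence gives $K_0(D_n)\cong\mathbb Z$ with $\mathbf 1$ a generator, and $K_1(D_n)\cong\mathbb Z/n\mathbb Z$. The key structural point is that the class $\mathbf 1\in K_0(D_n)$ is \emph{not} divisible in the appropriate sense, or rather that the two maps $K_*(D_n)\to K_*(D_n\otimes D_n)$, $x\mapsto x\otimes\mathbf 1$ and $x\mapsto\mathbf 1\otimes x$, differ. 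Actually the cleanest route avoids $K$-theory of the tensor product: argue as in Lemma \ref{L7} and Lemma \ref{L6} directly. Suppose $TC(D_n)=m<\infty$ with quotients $q_i:D_n\otimes D_n\to B_i$ and sections $\sigma_i:D_n\to B_i\otimes C[0,1]$. Each quotient of $D_n\otimes D_n$ factors through restriction to a closed subset of $[0,1]^2$ (fibred over $M_n\otimes M_n$, with dimension drops along the edges). Since $\cap_i\Ker q_i=0$, at least one $q_i$ does not kill the character $\varepsilon_0\otimes\varepsilon_1:D_n\otimes D_n\to\mathbb C$ (corresponding to the corner point $(0,1)\in[0,1]^2$).

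Next, composing the commuting diagram \eqref{diagram} for that $i$ with $\varepsilon_0\otimes\varepsilon_1$ (viewed as a character on $B_i$, which exists because that corner survives), we obtain two $*$-homomorphisms $D_n\to\mathbb C$, namely $(\varepsilon_0\otimes\varepsilon_1)\circ q_i\circ p_0$ and $(\varepsilon_0\otimes\varepsilon_1)\circ q_i\circ p_1$, together with a homotopy between them coming from $\sigma_i$ followed by $\ev_t$. But $(\varepsilon_0\otimes\varepsilon_1)\circ q_i\circ p_0 = \varepsilon_0$ and $(\varepsilon_0\otimes\varepsilon_1)\circ q_i\circ p_1=\varepsilon_1$, and I claim $\varepsilon_0$ and $\varepsilon_1$ are \emph{not} homotopic as characters of $D_n$ when $n>1$: the space of unital $*$-homomorphisms $D_n\to\mathbb C$ (with the point-norm topology) is the \emph{disjoint} union $\{\varepsilon_0\}\sqcup\{\varepsilon_1\}$, because any character must factor through $D_n/(\text{functions vanishing at }0\text{ and }1)\cong\mathbb C\oplus\mathbb C$ — a point in the interior $t\in(0,1)$ gives $M_n$, which for $n>1$ has \emph{no} character — so the character space is two isolated points. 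A continuous path of characters is therefore constant, contradicting $\varepsilon_0\neq\varepsilon_1$. This is the same mechanism as in Lemma \ref{L7}, where the matrix-valued interior prevented interpolation between the two diagonal evaluations at the endpoint.

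The main obstacle — and the one point needing care — is verifying that every quotient of $D_n\otimes D_n$ really does ``geometrically'' factor through restriction to a closed subset of $[0,1]^2$ in a way that lets us detect which corners survive, and in particular that the ideal $\cap_i\Ker q_i=0$ forces the corner $(0,1)$ (and $(1,0)$) into at least one surviving fibre with a genuine character. Since $D_n\otimes D_n\subset C([0,1]^2;M_n\otimes M_n)$ with prescribed scalar/reducible behaviour on the boundary edges, its primitive ideal space is the square with the four corners as ``extra'' points where a character lives; a quotient corresponds to a closed subset of this primitive ideal space, and the intersection-zero condition says the union of these closed subsets is everything, hence contains the corner $(0,1)$. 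Making this precise is routine but is where the argument has to be pinned down; once it is, the homotopy-of-characters contradiction closes the proof exactly as above, giving $TC(D_n)=\infty$ for $n>1$.
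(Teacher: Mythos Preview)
Your proposal is correct and follows essentially the same route as the paper's proof: pick the corner character of $D_n\otimes D_n$ (the paper uses $(1,0)$, you use $(0,1)$, which is immaterial by symmetry), argue that it factors through at least one $B_i$, and then observe that the induced homotopy would connect the two evaluation characters $\varepsilon_0,\varepsilon_1:D_n\to\mathbb C$, which is impossible for $n>1$ since the character space of $D_n$ consists of two isolated points. The $K$-theory digression at the start is unnecessary and can be dropped; the ``main obstacle'' you flag (that the corner survives in some $B_i$) is exactly what the paper asserts without justification, and your sketch via the primitive ideal space is the right way to pin it down --- more directly, $\Ker(\varepsilon_0\otimes\varepsilon_1)$ is primitive hence prime, so $\cap_i\Ker q_i=0\subset\Ker(\varepsilon_0\otimes\varepsilon_1)$ forces some $\Ker q_{i_0}\subset\Ker(\varepsilon_0\otimes\varepsilon_1)$.
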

\begin{proof}
We identify $D_n\otimes D_n$ with the subalgebra of functions $f=f(x,y)$ in $C([0,1]^2;M_n\otimes M_n)$ satisfying the obvious boundary conditions.
As above, if there exist $k$ quotients $B_1\ldots,B_k$ of $D_n\otimes D_n$ then at least one of them surjects onto a copy of $\mathbb C$ that identifies with restrictions of functions $f$ onto the point $(1,0)\in[0,1]^2$. Denote this map by $\mu:B_{i_0}\to\mathbb C$. If there is a homotopy $\sigma_{i_0}:D_n\to B\otimes C[0,1]$ then it restricts to a homotopy $D_n\to\mathbb C\otimes C[0,1]$. If the diagram (\ref{diagram}) commutes then $\mu\circ\ev_0\circ\sigma_{i_0}(f)=f(1)$ and $\mu\circ\ev_1\circ\sigma_{i_0}(f)=f(0)$, $f\in D_n$. But these two maps are not homotopic. 

\end{proof}
In both examples, $TC$ infinite means that there is no ``path'' connecting 0 and 1 in the noncommutative versions of an interval. 
In contrast with these examples is our next one.
Let 
$$
S_n=\{f\in C([0,1]; M_n): f(0)=f(1)\mbox{\ is\ scalar}\}.
$$ 
This is an algebra of matrix-valued functions on a circle, with the dimension drop at one point. If $n=1$ then $S_1$ is exactly the algebra of continuous functions on a circle.

\begin{thm}
For any $n\in\mathbb N$, $TC(S_n)=2$.

\end{thm}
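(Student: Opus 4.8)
The plan is to prove $TC(S_n)\ge 2$ by a $K$-theoretic obstruction and $TC(S_n)\le 2$ by imitating the two-chart motion planning of the circle, corrected near the dimension-drop locus.

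For the lower bound I would compute $K_*(S_n)$ from the split extension $0\to SM_n\to S_n\xrightarrow{\ev_0}\mathbb C\to 0$ (splitting $1\mapsto 1_{S_n}$). Since $K_0(SM_n)=K_1(M_n)=0$ and $K_1(SM_n)=K_0(M_n)=\mathbb Z$, this gives $K_0(S_n)=\mathbb Z\cdot\mathbf 1$ and $K_1(S_n)=\mathbb Z\cdot\xi$ for some generator $\xi$. As $S_n$ is separable and type I it lies in the bootstrap class, so the K\"unneth formula holds, and since all groups are free the Tor terms vanish, giving $K_1(S_n\otimes S_n)=\mathbb Z(\mathbf 1\otimes\xi)\oplus\mathbb Z(\xi\otimes\mathbf 1)$. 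Thus $\xi\otimes\mathbf 1\in K_*(S_n)\otimes\mathbf 1$ while $\xi\otimes\mathbf 1\notin\mathbf 1\otimes K_*(S_n)$, so Lemma \ref{K} yields $TC(S_n)>1$. (For $n=1$ this is the known fact $TC(C(\mathbb S))=2$.)

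For the upper bound I would view $S_n\otimes S_n$ as the algebra of continuous functions $f$ on $\mathbb T^2=\mathbb S\times\mathbb S$ with values in $M_n\otimes M_n$ such that $f(0,y)\in 1\otimes M_n$ and $f(x,0)\in M_n\otimes 1$, where $0\in\mathbb S$ is the base point; the dimension-drop locus is then the figure eight $E=(\{0\}\times\mathbb S)\cup(\mathbb S\times\{0\})$. Fix a unitary $U\in M_n\otimes M_n$ with $\Ad_U(b\otimes 1)=1\otimes b$ and a path $t\mapsto U_t$ of unitaries with $U_t=1\otimes1$ for $t\le\tfrac13$ and $U_t=U$ for $t\ge\tfrac23$ (possible since $U(M_{n^2})$ is connected). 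Take $F_1$ the complement of a small open tube around the antidiagonal $\{(x,x+\tfrac12)\}$ and $F_2$ a slightly larger closed tube around it, so $F_1\cup F_2=\mathbb T^2$, whence $\Ker q_1\cap\Ker q_2=0$ for the restriction maps $q_i\colon S_n\otimes S_n\to B_i:=(S_n\otimes S_n)|_{F_i}$. On each $F_i$ I would choose a jointly continuous family of paths $\gamma^i_{x,y}$ in $\mathbb S$ with $\gamma^i_{x,y}(0)=x$, $\gamma^i_{x,y}(1)=y$, which away from $E$ is the shortest geodesic on $F_1$ and the counterclockwise path on $F_2$, and near $E$ is rerouted through the base point $0$ so that $\gamma^i_{0,y}(t)=0$ for $t\le\tfrac23$ and $\gamma^i_{x,0}(t)=0$ for $t\ge\tfrac13$. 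Then I would set
$$\sigma_i(a)(x,y,t)=\Ad_{U_t}\bigl(a(\gamma^i_{x,y}(t))\otimes 1_n\bigr),\qquad a\in S_n.$$
Each $\sigma_i$ is a $*$-homomorphism $S_n\to B_i\otimes C[0,1]$ with $\ev_0\circ\sigma_i=q_i\circ p_0$ (as $U_0=1\otimes1$) and $\ev_1\circ\sigma_i=q_i\circ p_1$ (as $U_1=U$). The $S_n\otimes S_n$-constraints hold along $E$ because wherever $\gamma^i_{x,y}(t)=0$ the input $a(0)$ is a scalar (so $\Ad_{U_t}$ is irrelevant), and wherever $\gamma^i_{x,y}(t)\neq 0$ the rerouting arranges that $U_t$ is already of the right type: $1\otimes1$ along $\mathbb S\times\{0\}$ and $U$ along $\{0\}\times\mathbb S$. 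Hence $B_1,B_2,\sigma_1,\sigma_2$ witness $TC(S_n)\le 2$, and with the lower bound $TC(S_n)=2$.

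The main obstacle is the construction of the rerouted families $\gamma^i_{x,y}$: they must be jointly continuous on $F_i\times[0,1]$, have the prescribed endpoints, and interpolate between ``park at $0$ then move'' near $E$ and the unrerouted path away from it; this interpolation works because the relevant local comparison loops in $\mathbb S$ are null-homotopic (when near $E$, the points $x$, $y$, $0$ lie in a common arc shorter than $\mathbb S$). The one real compatibility condition occurs at the self-intersection $(0,0)$ of $E$, which lies only in $F_1$: there both parking requirements apply and force $\gamma^1_{0,0}\equiv 0$, which is consistent exactly because $[0,\tfrac23]\cup[\tfrac13,1]=[0,1]$. This bookkeeping is routine but is where all the care is needed.
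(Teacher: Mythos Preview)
Your proposal is correct, and the lower bound is exactly the paper's argument via Lemma~\ref{K}. For the upper bound both you and the paper use a two-piece ``near diagonal / far from diagonal'' decomposition together with a path $U_t$ to the flip unitary, so the strategy is the same, but the implementations differ.

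The paper works on the square $[0,1]^2$ with $Y_1=\{|x-y|\le 2/3\}$ and $Y_2$ the two far corners, and builds each homotopy in stages: on $Y_1$ it first deforms $f(x)\otimes 1$ to a map depending only on $x+y$ (which is scalar on the entire relevant boundary), then applies $\Ad_{U_t}$, then deforms to $1\otimes f(y)$; on $Y_2$ it simply contracts each square to the scalar corner $(1,0)=(0,1)$. Your version works on the torus with two annuli $F_1,F_2$ and packages everything into a single formula $\sigma_i(a)(x,y,t)=\Ad_{U_t}\bigl(a(\gamma^i_{x,y}(t))\otimes 1\bigr)$, pushing all the difficulty into the construction of the path families $\gamma^i$ with the ``park at $0$'' behaviour synchronised with the plateaus of $U_t$. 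That synchronisation (park on $[0,2/3]$ along $\{0\}\times\mathbb S$, on $[1/3,1]$ along $\mathbb S\times\{0\}$, while $U_t$ is constant on $[0,1/3]$ and $[2/3,1]$) is exactly what forces the values into $1\otimes M_n$ resp.\ $M_n\otimes 1$ on the dimension-drop locus, and the overlap $[1/3,2/3]$ is what makes the two requirements compatible at $(0,0)$.

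What your approach buys is a uniform, conceptually clean formula; what it costs is the existence proof for the continuous families $\gamma^i$ with those boundary constraints, which you rightly identify as the real work. That construction is indeed routine (near $E$ all relevant points lie in a proper arc, so one can write $\gamma^i$ as a three-stage concatenation through waypoints that vanish on $E$ and interpolate to the geodesic/counterclockwise midpoints away from $E$), but it is not shorter than the paper's staged argument. One small point to make explicit: on $F_2$ you should state that the $0\to y$ and $x\to 0$ legs of the rerouted paths are taken in the \emph{same} direction as the unrerouted counterclockwise path, so that the interpolation between rerouted and unrerouted stays in a fixed homotopy class rel endpoints.
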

\begin{proof}
We identify $S_n\otimes S_n$ with the algebra of $M_n\otimes M_n$-valued functions on $[0,1]^2$ with obvious boundary conditions. Let 
$$
Y_1=\{(x,y)\in[0,1]^2:|x-y|\leq 2/3\},
$$
$$
Y_2=\{(x,y)\in[0,1]^2:x\geq 2/3, y\leq 1/3\}\cup\{(x,y)\in[0,1]^2:x\leq 1/3, y\geq 2/3\}.
$$ 
Then $Y_1\cup Y_2=[0,1]^2$. Let $B_i$, $i=1,2$, be the algebras of continuous $M_n\otimes M_n$-valued functions with the same boundary conditions as in $S_n\otimes S_n$, and let $q_i:S_n\otimes S_n\to B_i$ be the quotient $*$-homomorphisms induced by restrictions onto $Y_i$.

We have to construct homotopies $\sigma_i:S_n\to B_i\otimes C[0,1]$ such that 
\begin{equation}\label{kontsy}
\ev_0\circ\sigma_i(f)(x,y)=f(x)\otimes 1,\quad \ev_1\circ\sigma_i(f)=1\otimes f(y).
\end{equation}
For $i=1$, $\ev_0\circ\sigma_1$ is homotopic to $\sigma'$ defined by 
$$
\sigma'(f)(x,y)=\left\lbrace\begin{array}{cl}
f(0)\otimes 1,&\mbox{for\ }x+y\geq 4/3\mbox{\ or\ }x+y\leq 2/3;\\f(\frac{x+y}{2/3}-1)\otimes 1,&\mbox{for\ }2/3\leq x+y\leq 4/3.\end{array}\right.
$$
Similarly, $\ev_1\circ\sigma_1$ is homotopic to $\sigma''=\Ad_U(\sigma')$, where $U$ intertwines $M_n\otimes 1$ and $1\otimes M_n$. Finally, $\sigma'$ is homotopic to $\sigma''$, as $\Ad_{U_t}$ maps scalars into scalars for any $t$, where $U_t$ is a path connecting $U$ with 1, so the boundary conditions on $Y_1$ hold.

For $i=2$, as 
$$
\{(x,y)\in[0,1]^2:x\geq 2/3, y\leq 1/3\}\cap\{(x,y)\in[0,1]^2:x\leq 1/3, y\geq 2/3\}=\emptyset,
$$ 
so after identifying 0 and 1, there is a single common point $(0,1)=(1,0)$. That's why we can construct the required homotopy separately for each of the $C^*$-algebras corresponding to these sets, but with the additional requirement that the two homotopies should agree at this common point. And as these sets are symmetric, it suffices to construct a homotopy for only one of them. Let $B_0$ denote the $C^*$-algebra of $M_{n^2}$-valued functions on 
$$
\{(x,y)\in[0,1]^2:x\geq 2/3, y\leq 1/3\}
$$ 
with the obvious boundary conditions, and let $q_0:S_n\otimes S_n\to B_0$ be the restriction quotient map. 

Note that the maps $\ev_0\circ\sigma_0$ and $\ev_1\circ\sigma_0$ (\ref{kontsy}) factorize through $A_0$ and $A_1$ respectively, where 
$$
A_0=\{f\in C([2/3,1];M_n):f(1)\mbox{\ is\ scalar}\},
$$ 
$$
A_1=\{f\in C([0,1/3];M_n):f(0)\mbox{\ is\ scalar}\}
$$ 
(i.e. with no restrictions at one of the end-points), hence the map $\ev_0\circ\sigma_0$ is homotopic to $\sigma'_0$ given by $$
\sigma'_0(f)(x,y)=f(1)\otimes 1,
$$ 
and the map $\ev_1\circ\sigma_0$ is homotopic to $\sigma''_0$ given by 
$$
\sigma'_0(f)(x,y)=1\otimes f(0).
$$ 
But, as $f(0)=f(1)$, they are homotopic. Along all these homotopies, their values at the point $(1,0)$ are the same.  
Thus, $TC(S_n)\leq 2$. 

To show that $TC(S_n)\neq 1$, let us calculate its $K$-theory groups. As $S_n$ is a split extension of $\mathbb C$ by the suspension $SM_n$ over $M_n$, one has $K_0(S_n)\cong K_1(S_n)\cong\mathbb Z$. Then 
$$
K_1(S_n\otimes S_n)\cong K_0(S_n)\otimes K_1(S_n)\oplus K_1(S_n)\otimes K_0(S_n).
$$ 
Let $(p_k)_*:K_1(S_n)\to K_1(S_n\otimes S_n)$ be the maps induced by the $*$-homomorphisms $p_k:S_n\to S_n\otimes S_n$, $k=0,1$, and let $e$ and $u$ be generators for $K_0(S_n)$ and for $K_1(S_n)$ respectively. Then 
$$
(p_0)_*(u)=u\otimes e\in K_1(S_n)\otimes K_0(S_n)\subset K_1(S_n\otimes S_n),
$$ 
$$
(p_1)_*(u)=e\otimes u\in K_0(S_n)\otimes K_1(S_n)\subset K_1(S_n\otimes S_n).
$$ 
As these elements are different,  there is no homotopy that connects $p_0$ with $p_1$.

\end{proof}

%


\end{document}